\newtheorem{thm}{Theorem}[section]
\newtheorem{exam}{Example}[section]
\newtheorem{cor}[thm]{Corollary}
\newtheorem{lem}[thm]{Lemma}
\newtheorem{prop}[thm]{Proposition}
\theoremstyle{definition}
\newtheorem{defn}{Definition}[section]
\theoremstyle{remark}
\newtheorem{rem}{Remark}[section]
\DeclareMathOperator{\ptc}{\xrightarrow[]{p_\tau}}
\DeclareMathOperator{\uptc}{\xrightarrow[]{up_\tau}}
\DeclareMathOperator{\tc}{\xrightarrow[]{\tau}}
\DeclareMathOperator{\unc}{\xrightarrow[]{un}}
\DeclareMathOperator{\uoc}{\xrightarrow[]{uo}}
\DeclareMathOperator{\upc}{\xrightarrow[]{up}}
\DeclareMathOperator{\oc}{\xrightarrow[]{o}}
\DeclareMathOperator{\pc}{\xrightarrow[]{p}}
\DeclareMathOperator{\nc}{\xrightarrow{\lVert \cdot \rVert}}
\newcommand{\eval}[2][\right]{\relax
  \ifx#1\right\relax \left.\fi#2#1\rvert}
\begin{document}

\title{\bf Unbounded $p_\tau$-Convergence in Vector Lattices Normed by Locally Solid Lattices} 
\maketitle

\author{\centering Abdullah AYDIN \\ \bigskip  \small  
	Department of Mathematics, Mu\c{s} Alparslan University, Mu\c{s}, Turkey. \\}

\bigskip

\abstract{Let $(x_\alpha)$ be a net in a vector lattice normed by locally solid lattice $(X,p,E_\tau)$. We say that $(x_\alpha)$ is unbounded $p_\tau$-convergent to $x\in X$ if $p(\lvert x_\alpha-x\rvert\wedge u)\xrightarrow{\tau} 0$ for every $u\in X_+$. This convergence has been studied recently for lattice-normed vector lattices as the $up$-convergence in \cite{AGG,AEEM,AEEM2}, the $uo$-convergence in \cite{GTX}, and, as the $un$-convergence in \cite{DOT,GX,GTX,KMT,Tr2}. In this paper, we study the general properties of the unbounded $p_\tau$-convergence.
}

\bigskip
\let\thefootnote\relax\footnotetext
{Keywords: $up_\tau$-convergence, lattice-normed space, locally solid vector lattice, mixed-normed space
	
\text{2010 AMS Mathematics Subject Classification:} Primary 46A40; Secondary 46E30.

e-mail: a.aydin@alparslan.edu.tr}

\section{Introduction}
Locally solid vector lattices and lattice-valued norms provide a natural and efficient tools in the functional analysis. We refer the reader for detail information; see for example \cite{AA,AB,ABPO,A,DEM,E,K,L}. In this paper, the aim is to illustrate the usefulness of lattice-valued norms for investigation of different types of {\em unbounded $p$-convergence} in a lattice-normed vector lattice; see \cite{AGG,AEEM,AEEM2} and different types of {\em unbounded convergence} in a vector lattice, which attracted the attention of several authors in series of recent papers; see for example \cite{A,DOT,GX,GTX,KMT,Tr2}. 

Recall that a net $(x_\alpha)_{\alpha\in A}$ in a vector lattice $X$ is {\em order convergent} to $x\in X$ if there exists another net $(y_\beta)_{\beta\in B}$ satisfying $y_\beta \downarrow 0$, and for any $\beta\in B$, there is $\alpha_\beta\in A$ such that $\lvert x_\alpha-x\rvert\leq y_\beta$ for all $\alpha\geq\alpha_\beta$. In this case, we write $x_\alpha\oc x$. In a vector lattice $X$, 
a net $(x_\alpha)$ is {\em unbounded order convergent} to $x\in X$ if $\lvert x_\alpha-x\rvert\wedge u\oc 0$ for every $u\in X_+$; see for example \cite{DOT,GTX,KMT}. In this case, we write $x_\alpha\uoc x$. The $uo$-convergence is an abstraction of the $a.e.$-convergence in $L_p$-spaces for $1\leq p<\infty$, \cite{GX,GTX}. In a normed lattice $(X,\lVert \cdot\rVert)$, a net $(x_\alpha)$ is {\em unbounded norm convergent} to $x\in X$, written as $x_\alpha\unc x$, if $\lVert \lvert x_\alpha-x\rvert\wedge u\rVert\to 0$ for every $u\in X_+$; see \cite{DOT}. 

Let $X$ be a vector space, $E$ be a vector lattice, and $p:X\to E_+$ be a vector norm (i.e. $p(x)=0\Leftrightarrow x=0$; $p(\lambda x)=|\lambda|p(x)$ for all $\lambda\in\mathbb{R}$, $x\in X$; and $p(x+y)\leq p(x)+p(y)$ for all $x,y\in X$) then the triple $(X,p,E)$ is called a {\em lattice-normed space}, abbreviated as $LNS$; see for example \cite{K}. If $X$ is a vector lattice and the vector norm $p$ is monotone (i.e. $\lvert x\rvert\leq \lvert y\rvert\Rightarrow p(x)\leq p(y)$) then the triple $(X,p,E)$ is called a {\em lattice-normed vector lattice}, abbreviated as $LNVL$; see \cite{AEEM}. We abbreviate the convergence $p(x_{\alpha}-x)\oc 0$ as $x_\alpha\pc x$, and say in this case that $(x_\alpha)$ {\em $p$-converges} to $x$. A net $(x_\alpha)_{\alpha \in A}$ in an $LNS$ $(X,p,E)$ is said to be {\em $p$-Cauchy} if the net $(x_\alpha-x_{\alpha'})_{(\alpha,\alpha') \in A\times A}$ $p$-converges to $0$. An $LNS$ $(X,p,E)$ is called (\textit{sequentially}) {\em $p$-complete} if every $p$-Cauchy (sequence) net in $X$ is $p$-convergent. In an $LNS$ $(X,p,E)$, a subset $A$ of $X$ is called {\em $p$-bounded} if there exists $e\in E$ such that $p(a)\leq e$ for all $a\in A$. An $LNVL$ $(X,p,E)$ is called {\em $op$-continuous} if $x_\alpha\oc 0$ implies $p(x_\alpha)\oc 0$. A net $(x_\alpha)$ in an $LNVL$ $(X,p,E)$ is said to be {\em unbounded $p$-convergent} to $x\in X$ (shortly, $(x_\alpha)$ $up$-converges to $x$ or  
$x_\alpha\upc x$), if $p(|x_\alpha-x|\wedge u)\oc 0$ for all $u\in X_+;$ see \cite[Def.6]{AEEM}.   

Let $E$ be a vector lattice and $\tau$ be a linear topology on $E$ that has a base at zero consisting of solid sets. Then the pair $(E,\tau)$ is called a {\em locally solid vector lattice}. It follows from \cite[Thm.2.28]{AB} that a linear topology $\tau$ on a vector lattice $E$ is locally solid vector lattice iff it is generated by a family of Riesz pseudonorms $\{\rho_j\}_{j\in J}$. Moreover, if a family of Riesz pseudonorms generates a locally solid topology $\tau$ on a vector lattice $E$, then $x_\alpha \tc x$ iff $\rho_j(x_\alpha-x)\to 0$ in $\mathbb{R}$ for each $j\in J$.  It should be noted that all topologies considered throughout this article are assumed to be Hausdorff, and also, unless otherwise, the pair $(E,\tau)$ refers to a locally solid vector lattice with a family of Riesz pseudonorms $\{\rho_j\}_{j\in J}$ that generates the topology $\tau$. Moreover, all vector lattices are assumed to be real and Archimedean. For any $u\in E$, one can observe that $\frac{1}{n}u\tc 0$ in a $(E,\tau)$ because of $\rho_j(\frac{1}{n}u)\leq\frac{1}{n}\rho_j(u)\to 0$ in $\mathbb{R}$ for all $j\in J$; see \cite[p.2]{DEM}. So, we shall keep in mind the following lemma, obtained from \cite[Thm.2.28]{AB}. 

\begin{lem}\label{monoton is convergent}
Let $(x_\alpha)_{\alpha\in A}$ and $(y_\alpha)_{\alpha\in A}$ be two nets in a locally solid vector lattice $(E,\tau)$. If $\lvert y_\alpha\rvert\leq \lvert x_\alpha\rvert$ for all $\alpha \in A$ and $x_\alpha\tc 0$ in $E$ then $y_\alpha\tc 0$ in $E$.  
\end{lem}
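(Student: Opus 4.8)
The plan is to reduce the statement to a pointwise inequality between Riesz pseudonorms, using the description of $\tau$ recalled just above the lemma. Recall that $\tau$ is generated by the family $\{\rho_j\}_{j\in J}$ of Riesz pseudonorms, that each $\rho_j$ is monotone (if $\lvert u\rvert\leq\lvert v\rvert$ then $\rho_j(u)\leq\rho_j(v)$), and that $z_\alpha\tc 0$ in $E$ holds if and only if $\rho_j(z_\alpha)\to 0$ in $\mathbb{R}$ for every $j\in J$.

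First I would fix $j\in J$. By hypothesis $x_\alpha\tc 0$, so $\rho_j(x_\alpha)\to 0$. Combining $\lvert y_\alpha\rvert\leq\lvert x_\alpha\rvert$ with the monotonicity of $\rho_j$ gives $0\leq\rho_j(y_\alpha)\leq\rho_j(x_\alpha)$ for every index $\alpha$, whence $\rho_j(y_\alpha)\to 0$. Since $j\in J$ was arbitrary, $\rho_j(y_\alpha)\to 0$ for all $j\in J$, which is precisely $y_\alpha\tc 0$ in $E$.

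An equivalent route, bypassing the pseudonorm representation, is to argue straight from the definition of a locally solid topology: let $V$ be an arbitrary solid $\tau$-neighborhood of $0$ (such sets form a base at $0$). Since $x_\alpha\tc 0$, there is $\alpha_0$ with $x_\alpha\in V$ for all $\alpha\geq\alpha_0$; because $\lvert y_\alpha\rvert\leq\lvert x_\alpha\rvert$ and $V$ is solid, $y_\alpha\in V$ for all $\alpha\geq\alpha_0$. As $V$ was an arbitrary basic neighborhood, $y_\alpha\tc 0$.

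I do not expect a genuine obstacle; the only point requiring care is to invoke the right structural feature — monotonicity of the generating Riesz pseudonorms, or equivalently the existence of a base of solid zero-neighborhoods — rather than properties special to honest norms. This lemma is the locally solid counterpart of the familiar fact that a net dominated in modulus by an order-null (respectively norm-null) net is itself null, and the argument follows the same pattern.
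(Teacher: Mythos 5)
Your proof is correct. The paper states this lemma without proof, as a standard fact about locally solid vector lattices, and both of your routes supply exactly the standard justification: the pseudonorm argument works because monotonicity ($\lvert u\rvert\leq\lvert v\rvert\Rightarrow\rho_j(u)\leq\rho_j(v)$) is part of the definition of a Riesz pseudonorm in \cite[Thm.~2.28]{AB}, and the second, neighborhood-based argument is even more direct, using only that solid sets form a base at zero. Either version would serve as the omitted proof; no gaps.
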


Let $(X,p,E)$ be an $LNVL$ whit $(E,\tau)$ being a locally solid vector lattice, then $(X,p,E_\tau)$ is called a {\em vector lattice normed by locally solid lattice}, abbreviated as $LSNVL$. Dealing with $LSNVL$s, we shall keep in mind also the following examples.

\begin{exam}
Let $(E,\tau)$ be a locally solid vector lattice. Then $(E,\lvert \cdot \rvert, E_\tau)$ is an $LSNVL$.
\end{exam}

\begin{exam}
Let $(X,\lVert \cdot \rVert)$ be a normed lattice. Then $(X,\lvert\cdot\rvert,X_{\lVert\cdot\rVert})$ is an $LSNVL$.
\end{exam}
By considering Example 2.4 of \cite{L}, we give the following example.
\begin{exam}
Let $E$ be the space of all Lebesgue measurable functions on $\mathbb{R}$ with the usual pointwise ordering, i.e., for $f,g\in E$, we define $f\leq g$ iff $f(t)\leq g(t)$ for every $t\in \mathbb{R}$. Consider the map $\lVert \cdot \rVert:E\to \mathbb{R}$ defined by $\lVert f \rVert =\big(\int (f(t))^2dt\big)^{\frac{1}{2}}$, where $f\in E$. Then the norm $\lVert \cdot \rVert$ is a seminorm on $E$. It is easy to see that it is also a Riesz seminorm. Thus, the topology $\tau$ generated by $\lVert \cdot \rVert$ is locally convex-solid; see \cite[Thm.2.25]{AB}. Now, consider a vector lattice $X$ and a map $p:X\to E$ defined by $p(x)=p(x)[f]=\lvert f\rvert(\lvert x\rvert)$. Then $(X,p,E_\tau)$ is an $LSNVL$.
\end{exam}


\section{The $p_\tau$-Convergence}
Most of the notions and results in this section are direct analogs of well-known facts of the theory of normed lattices and locally solid vector lattices. 
\begin{defn}
Let $(X,p,E_\tau)$ be an $LSNVL$. A net $(x_\alpha)$ in $X$ is called {\em $p_\tau$-convergent} to $x\in X$ if $p(x_\alpha-x)\tc 0$ in $E$. We write $x_\alpha\xrightarrow{p_\tau}x$, and say that $(x_\alpha)$ $p_\tau$-converges to $x$.
\end{defn}

In the following lemma, we give some basic properties of the $p_\tau$-convergence.
\begin{lem}\label{basic properties}
Let $(x_\alpha)$ and $(y_\beta)$ be two nets in an $LSNVL$ $(X,p,E_\tau)$, then;
\begin{enumerate}
\item[(i)] $x_\alpha\ptc x$ iff $(x_\alpha- x)\ptc 0$;

\item[(ii)] if $x_\alpha\ptc x$ then $y_\beta\ptc x$ for each subnet $(y_\beta)$ of $(x_\alpha)$;

\item[(iii)] suppose $x_\alpha\ptc x$ and $y_\beta\ptc y$, then $ax_\alpha+by_\beta\ptc ax+by$ for any $a,b\in \mathbb{R}$;

\item[(iv)] if $x_\alpha \ptc x$ and $x_\alpha \ptc y$ then $x=y$; 

\item[(v)] if $x_\alpha \ptc x$ then $\lvert x_\alpha\rvert \ptc \lvert x \rvert$.
\end{enumerate}
\end{lem}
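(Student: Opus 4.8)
The plan is to reduce everything to the definition of $p_\tau$-convergence, namely that $p(x_\alpha - x) \tc 0$ in $E$, and then exploit the basic properties of the locally solid topology $\tau$ together with the monotonicity of the lattice norm $p$. Throughout, I will freely use \lemref{monoton is convergent} (the squeeze/domination property for $\tc$) and the subadditivity, homogeneity and monotonicity of $p$.

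For (i), the equality $p(x_\alpha - x) = p\bigl((x_\alpha - x) - 0\bigr)$ makes the two statements literally identical, so nothing is to prove. For (ii), if $(y_\beta)$ is a subnet of $(x_\alpha)$ then $\bigl(p(y_\beta - x)\bigr)$ is a subnet of $\bigl(p(x_\alpha - x)\bigr)$, and a subnet of a $\tau$-null net is $\tau$-null since $\tau$ is a (linear, hence topological) convergence; I would simply invoke this standard fact about nets in topological spaces. For (iii), I would write $|(ax_\alpha + by_\beta) - (ax + by)| \le |a|\,|x_\alpha - x| + |b|\,|y_\beta - y|$, apply $p$ and use subadditivity and homogeneity to get $p\bigl((ax_\alpha + by_\beta) - (ax + by)\bigr) \le |a|\,p(x_\alpha - x) + |b|\,p(y_\beta - y)$; the right-hand side $\tc 0$ because $\tau$ is a linear topology and each summand $\tc 0$, so \lemref{monoton is convergent} (with the obvious domination, both sides being positive) finishes it. One small point to be careful about: here $(x_\alpha)$ and $(y_\beta)$ are indexed by possibly different directed sets, so the combination should be read along the product directed set $A \times B$ in the usual way; I would mention this explicitly.

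For (iv), uniqueness of limits, I would argue that $p(x - y) \le p(x - x_\alpha) + p(x_\alpha - y) \tc 0$, so the constant net $p(x-y)$ is $\tau$-null; since $\tau$ is Hausdorff this forces $p(x - y) = 0$, and then the definiteness of the vector norm $p$ gives $x = y$. For (v), I would use the reverse triangle inequality in the vector lattice $X$, namely $\bigl|\,|x_\alpha| - |x|\,\bigr| \le |x_\alpha - x|$, and apply the monotonicity of $p$ to obtain $p\bigl(|x_\alpha| - |x|\bigr) \le p(x_\alpha - x) \tc 0$, whence $|x_\alpha| \ptc |x|$ by \lemref{monoton is convergent} again.

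I do not anticipate a serious obstacle: every item is a one- or two-line consequence of a standard inequality plus \lemref{monoton is convergent} and the fact that $\tau$ is a Hausdorff linear topology. The only mild subtleties worth flagging are the product-net bookkeeping in (iii) and the use of the Hausdorff assumption in (iv); both are routine but should be stated so the argument is airtight.
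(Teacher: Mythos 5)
Your proposal is correct and follows essentially the same route as the paper: the same triangle-inequality estimates combined with Lemma~\ref{monoton is convergent} for (iii)--(v), with (i) and (ii) dismissed as immediate. You are in fact slightly more explicit than the paper in spelling out the Hausdorff/definiteness step in (iv) and the product-directed-set bookkeeping in (iii), which is harmless and makes the argument airtight.
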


\begin{proof}
$(i)$, $(ii)$ and $(iii)$ are obvious. Observe the inequality $p(x-y) \leq p(x-x_\alpha)+p(x_\alpha-y)$, and so $(iv)$ holds. For $(v)$, by using the inequality $\big\lvert \rvert x_\alpha\rvert -\lvert x\rvert\big\rvert\leq \lvert x_\alpha -x\rvert$, we have $p(\rvert x_\alpha\rvert -\lvert x\rvert)\leq p(x_\alpha -x)$. Therefore, we get the result.
\end{proof}

The $p_\tau$-convergence coincides with some kinds of convergence, and we show them in the following remarks. As it was observed in \cite{DOT}, the $un$-convergence is topological. This topology is called a {\em $un$-topology}, see \cite[p.3]{KMT}, and also, it is a locally solid topology. 
\begin{rem}\label{un convergence}
Consider an $LSNVL$ $(X,p,E_\tau)$ with $E$ being a Banach lattice and the topology $\tau$ being $un$-topology on $E$. Then, for any net $(x_\alpha)$ in $X$,
\begin{enumerate}
\item[(1)] $x_\alpha\ptc x$ in $X$ iff $p(x_\alpha-x)\unc 0$ in $E$,
		
\item[(2)] if $E$ has a strong unit, then $x_\alpha\ptc x$ in $X$ iff $p(x_\alpha-x) \nc 0 $ in $E$; see \cite[Prop.2.3]{KMT},
		
\item[(3)] if $E$ has a quasi-interior point, then $x_\alpha\ptc x$ in $X$ iff  $p(x_\alpha-x)\xrightarrow{d} 0$ in $E$, where $d$ is a metric on $E$ which gives the $un$-topology; see \cite[Prop.3.2]{KMT}.
\end{enumerate}
\end{rem}

For an $LSNVL$ $(X,p,E_\tau)$ with $(E,\lVert\cdot\rVert)$ being a normed vector lattice and $\tau$ being the topology generated by a family of Riesz pseudonorms $p_u:E\to \mathbb{R}_+$, for each $u\in E_+$, defined by $p_u(x)=\big\lVert \lvert x\rvert\wedge u\big\rVert$. Then, for a net $(x_\alpha)$ and $x\in X$, we have $x_\alpha \ptc x$ iff $p(x_\alpha-x) \xrightarrow{un} 0$; see \cite[Thm.2.1]{EV}.

Let $M=\{m_\lambda\}_{\lambda \in \Lambda}$ be a separating family of lattice semi-norms on a vector lattice $E$. Then a net $(x_\alpha)$ in $E$ is $um$-convergent to $x$ if $m_\lambda(x_\alpha-x)\wedge u\to 0$ for all $\lambda \in \Lambda$ and $u\in E_+$. This convergence is topological, and that is called an $um$-topology; see \cite[p.4]{DEM2}.
\begin{rem}\label{um convergence}
Let $(X,p,E_\tau)$ be an $LSNVL$ with $(E,\tau)$ is $um$-topology. Then, for a net $(x_\alpha)$ and $x\in X$, we have $x_\alpha \ptc x$ iff $p(x_\alpha-x) \xrightarrow{um} 0$ in $E$.
\end{rem}

Now, we continue with some basic results about the $p_\tau$-convergence.
\begin{prop}\label{triangle convergence}
Let $(X,p,E_\tau)$ be an $LSNVL$ and $(x_\alpha), (y_\alpha)$ and $(z_\alpha)$ be three nets in $X$ such that $x_\alpha\leq y_\alpha\leq z_\alpha$ for all $\alpha$. If $x_\alpha\ptc x$ and $z_\alpha\ptc x$ for $x\in X$ then $y_\alpha\ptc x$.
\end{prop}

\begin{proof}
It follows from Lemma \ref{monoton is convergent} and \cite[Thm.2.28]{AB}.
\end{proof}

\begin{prop}\label{LO are $p$-continuous}
Let $(x_\alpha)_{\alpha \in A}$ and $(y_\beta)_{\beta \in B}$ be two nets in an $LSNVL$ $(X,p,E_\tau)$. If $x_\alpha\ptc x$ and $y_\beta\ptc y$ then $(x_\alpha\vee y_\beta)_{(\alpha,\beta)\in A\times B}\ptc x \vee y$.
\end{prop}

\begin{proof}
Consider the family of Riesz pseudonorms $\{\rho_j\}_{j\in J}$ that generates the topology $\tau$. Since $x_\alpha\ptc x$ and $y_\beta\ptc y$, we have $p(x_\alpha-x)\tc 0$ and $p(y_\beta-y)\tc 0$ in $E$, or $\rho_j\big(p(x_\alpha-x)\big)\to 0$, and also $\rho_j\big(p(y_\beta-y)\big)\to 0$ in $\mathbb{R}$ for all $j\in J$. By using \cite[Thm.1.9(2)]{ABPO}, we have
\begin{eqnarray*}
p(x_\alpha \vee y_\beta - x\vee y)&\leq& p(\lvert x_\alpha \vee y_\beta -x_\alpha \vee y\rvert)+p(\lvert x_\alpha \vee y- x\vee y\rvert)\\&\leq&p(\lvert y_\beta-y\rvert)+p(\lvert x_\alpha-x\rvert).
\end{eqnarray*}
Thus, we have $\rho_j\big(p(x_\alpha \vee y_\beta-x\vee y)\big)\leq\rho_j\big(p( y_\beta-y)\big)+\rho_j\big(p(x_\alpha-x)\big)$ for all $j\in J$. Hence, we get $\rho_j\big(p(x_\alpha \vee y_\beta-x\vee y)\big) \to 0$ in $\mathbb{R}$ for all $j\in J$. Therefore, $(x_\alpha\vee y_\beta)_{(\alpha,\beta)\in A\times B}\ptc x \vee y$ in $X$.
\end{proof}

\begin{defn}
Let $(X,p,E_\tau)$ be an $LSNVL$. A subset $A$ of $X$ is called $p_\tau$-closed set in $X$ if, for any net $(x_\alpha)$ in $A$ that is $p_\tau$-convergent to $x\in X$, these holds $x\in A$.
\end{defn}
It is clear that the positive cone $X_+$ of an $LSNVL$ $X$ is $p_\tau$-closed. Indeed, assume $(x_\alpha)$ is a net in $X_+$ such that it $p_\tau$-converges to $x\in X$. By Proposition \ref{LO are $p$-continuous}, we have $x_\alpha=x_\alpha^+\ptc x^+$, and so we get $x= x^+$. Therefore, $x\in X_+$.
\begin{thm}\label{monoton is order conv}
Any monotone $p_\tau$-convergent net in an $LSNVL$ $(X,p,E_\tau)$ order converges to its $p_\tau$-limits.
\end{thm}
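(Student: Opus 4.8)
The plan is to handle the increasing case (the decreasing case being symmetric, or obtainable by passing to $-x_\alpha$) and to extract an order-convergence witness directly from the monotonicity together with $p_\tau$-convergence. So suppose $x_\alpha \uparrow$ and $x_\alpha \ptc x$. First I would show $x$ is an upper bound for $(x_\alpha)$: fix an index $\alpha_0$; for all $\alpha \geq \alpha_0$ we have $x_\alpha \geq x_{\alpha_0}$, hence $(x_\alpha - x_{\alpha_0})^- = 0$, i.e. $(x_{\alpha_0} - x_\alpha)^+ = 0$. By Lemma~\ref{basic properties}(iii) and Proposition~\ref{LO are $p$-continuous}, the net $(x_{\alpha_0} - x_\alpha)^+$ (indexed by $\alpha \geq \alpha_0$) is $p_\tau$-convergent to $(x_{\alpha_0} - x)^+$; since it is identically $0$ and $p_\tau$-limits are unique (Lemma~\ref{basic properties}(iv)), we get $(x_{\alpha_0} - x)^+ = 0$, that is $x_{\alpha_0} \leq x$. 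As $\alpha_0$ was arbitrary, $x$ dominates the whole net, so $x - x_\alpha \geq 0$ and $|x_\alpha - x| = x - x_\alpha$ is itself a decreasing net in $X_+$.

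The heart of the matter is to produce a net $y_\beta \downarrow 0$ in $X$ that eventually dominates $|x_\alpha - x|$. The natural candidate is the net $(x - x_\alpha)$ itself: it is decreasing, and it is positive by the previous paragraph, so it suffices to show $\inf_\alpha (x - x_\alpha) = 0$, equivalently $\sup_\alpha x_\alpha = x$. We already know $x$ is an upper bound. Let $z \in X$ be any other upper bound, $z \geq x_\alpha$ for all $\alpha$; then $0 \leq x - x_\alpha \leq z - x_\alpha$ for... no — I need the reverse: I want $z \geq x$. Here I would argue as follows: since $x_\alpha \ptc x$ and $x_\alpha \leq z$, the constant net $z$ minus $x_\alpha$ satisfies $(z - x_\alpha) \ptc (z - x) \geq 0$ by Lemma~\ref{basic properties}(iii); but $z - x_\alpha \in X_+$ for all $\alpha$, and $X_+$ is $p_\tau$-closed (as noted right after Definition~2.2 / Proposition~\ref{LO are $p$-continuous} in the excerpt), so $z - x \in X_+$, i.e. $z \geq x$. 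Hence $x$ is the least upper bound, so $x - x_\alpha \downarrow 0$.

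Finally I assemble the order-convergence statement. With $y_\alpha := x - x_\alpha$ we have $y_\alpha \downarrow 0$, and for every $\alpha$, $|x_\alpha - x| = x - x_\alpha = y_\alpha$, so trivially $|x_{\alpha'} - x| \leq y_\alpha$ for all $\alpha' \geq \alpha$ (in fact equality with the same index). By the definition of order convergence recalled in the Introduction, this gives $x_\alpha \oc x$. For a general monotone net — decreasing instead of increasing — apply the argument to $(-x_\alpha)$, which increases and $p_\tau$-converges to $-x$ by Lemma~\ref{basic properties}(iii), and then negate. I expect the only genuinely delicate point to be the supremum identification in the second paragraph; the cleanest route is the $p_\tau$-closedness of $X_+$, which is already established in the text, so no real obstacle remains — the rest is bookkeeping with Lemma~\ref{basic properties} and Proposition~\ref{LO are $p$-continuous}.
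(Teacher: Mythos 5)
Your proof is correct and follows essentially the same route as the paper: reduce to the increasing case, use $p_\tau$-closedness of $X_+$ (equivalently, $p_\tau$-continuity of the lattice operations plus uniqueness of $p_\tau$-limits) on the tail nets to show $x$ is an upper bound and then the least upper bound, so that $x_\alpha\uparrow x$. Your only addition is spelling out the order-convergence witness $y_\alpha=x-x_\alpha\downarrow 0$ and the reduction of the decreasing case via $-x_\alpha$, which the paper leaves implicit.
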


\begin{proof}
It is enough to show that if a net $(x_\alpha)$ is increasing and $p_\tau$-convergent to $x\in X$ then $x_\alpha\uparrow x$. Fix an arbitrary index $\alpha$. Then $x_\beta-x_\alpha\in X_+$ for $\beta\ge\alpha$. Since $X_+$ is $p_\tau$-closed, we have $x_\beta-x_\alpha\xrightarrow[\beta]{p_\tau}x-x_\alpha\in X_+$; see Lemma \ref{basic properties}$(iii)$, and so we get $x\geq x_\alpha$. Since $\alpha$ is arbitrary, $x$ is an upper bound of $x_\alpha$. By $p_\tau$-closeness of $X_+$, if $y\geq x_\alpha$ for all $\alpha$ then $y-x_\alpha\xrightarrow{p} y-x\in X_+$. Therefore, $y\ge x$ and so $x_\alpha \uparrow x$.
\end{proof}

We continue with several basic notions in an $LSNVL$ which are motivated by their analogues vector lattice theory.
\begin{defn}\label{$p$-notions}
Let $X=(X,p,E_\tau)$ be an $LSNVL$. Then 
\begin{enumerate}
\item[(1)] a net $(x_\alpha)_{\alpha \in A}$ in $X$ is said to be {\em $p_\tau$-Cauchy} if the net $(x_\alpha-x_{\alpha'})_{(\alpha,\alpha') \in A\times A}\ptc 0$,
	
\item[(2)] $X$ is called {\em $p_\tau$-complete} if every $p_\tau$-Cauchy net in $X$ is $p_\tau$-convergent,
	
\item[(3)] $X$ is called {\em $op_\tau$-continuous} if $x_\alpha\xrightarrow{o}0$ implies $p(x_\alpha)\xrightarrow{\tau}0$.
\end{enumerate}	
\end{defn}

\begin{rem}\label{ptau closed}
A $p_\tau$-closed sublattice in an $op_\tau$-continuous $LSNVL$ is order closed. Indeed, suppose that $Y$ is $p_\tau$-closed in $X$, $(y_\alpha)$ is a net in $Y$ and $x\in X$ such that $y_\alpha\oc x$. Since $X$ is $op_\tau$-continuous, we have $y_\alpha \ptc x$. Thus, since $Y$ is $p_\tau$-closed, we get $x\in Y$.
\end{rem}

Recall that a locally solid vector lattice $(X,\tau)$ is said to have the {\em Lebesgue property} if $x_\alpha \oc 0$ implies $x_\alpha \tc 0$. It is clear that the $LSNVL$ $(X,\lvert \cdot \rvert,X_\tau)$ with $(X,\tau)$ having the Lebesgue property is $op_\tau$-continuous. 
\begin{prop}\label{$op$-cont-0}
For an $LSNVL$ $(X,p,E_\tau)$ with $(E,\tau)$ having the Lebesgue property,
\begin{enumerate}
\item[(i)] the $p$-convergence implies the $p_\tau$-convergence,
		
\item[(ii)] if, for any net $(x_\alpha)$ in $X$, $x_\alpha\downarrow 0$ implies $p(x_\alpha)\downarrow 0$ then $X$ is $op_\tau$-continuous. 
\end{enumerate}
\end{prop}

\begin{prop}\label{lebesgue implies Cauchy}
For an $op_\tau$-continuous $LSNVL$ $(X,p,E_\tau)$, if $0\leq x_\alpha\uparrow\leq x$ holds in X then $(x_\alpha)$ is a $p_\tau$-Cauchy net in $X$.
\end{prop}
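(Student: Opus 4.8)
The plan is to read $p_\tau$-Cauchyness as a $p_\tau$-convergence statement for the net of differences, and then to reduce that statement---via $op_\tau$-continuity---to a single monotone-net order estimate. By Definition~\ref{$p$-notions}$(i)$, I must show that $(x_\alpha-x_{\alpha'})_{(\alpha,\alpha')\in A\times A}$ $p_\tau$-converges to $0$, i.e. $p(x_\alpha-x_{\alpha'})\tc 0$ in $E$. Since $p$ is monotone we have $p(x_\alpha-x_{\alpha'})=p(\lvert x_\alpha-x_{\alpha'}\rvert)\leq p(z)$ for every $z\in X$ with $\lvert x_\alpha-x_{\alpha'}\rvert\leq z$; hence, by Lemma~\ref{monoton is convergent}, it suffices to dominate $\lvert x_\alpha-x_{\alpha'}\rvert$ by an order null net in $X$ along $A\times A$ and then to invoke $op_\tau$-continuity (Definition~\ref{$p$-notions}$(iii)$): if $\lvert x_\alpha-x_{\alpha'}\rvert\oc 0$, then $x_\alpha-x_{\alpha'}\oc 0$ by the definition of order convergence, and therefore $p(x_\alpha-x_{\alpha'})\tc 0$.

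For the estimate I would fix $\gamma\in A$ and use that $(x_\beta)$ is increasing together with the identity $\lvert a-b\rvert=a\vee b-a\wedge b$: for all $\alpha,\alpha'\geq\gamma$ one has $x_\gamma\leq x_\alpha\wedge x_{\alpha'}\leq x_\alpha\vee x_{\alpha'}\leq x$, and consequently
$$
\lvert x_\alpha-x_{\alpha'}\rvert=x_\alpha\vee x_{\alpha'}-x_\alpha\wedge x_{\alpha'}\leq x-x_\gamma .
$$
The net $(x-x_\gamma)_{\gamma\in A}$ is positive and decreasing (because $x_\gamma\uparrow$), and it decreases to $0$; granting this, it witnesses $\lvert x_\alpha-x_{\alpha'}\rvert\oc 0$ along $A\times A$, since for the index $(\gamma,\gamma)$ we have $\lvert x_\alpha-x_{\alpha'}\rvert\leq x-x_\gamma$ whenever $(\alpha,\alpha')\geq(\gamma,\gamma)$. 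The first paragraph then gives $p(x_\alpha-x_{\alpha'})\tc 0$, i.e. $(x_\alpha)$ is $p_\tau$-Cauchy. (If preferred, the last two steps may instead be carried out through the generating Riesz pseudonorms $\{\rho_j\}_{j\in J}$ and the squeeze property in $\mathbb{R}$, exactly as in the proof of Theorem~\ref{triangle convergence}; note that no decomposability of $p$ is used.)

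The step I expect to be the real obstacle is the assertion $(x-x_\gamma)_{\gamma\in A}\downarrow 0$, which is precisely the claim that $x=\sup_\gamma x_\gamma$. This is immediate when $x$ is taken to be the least upper bound of $(x_\gamma)$, and in general the argument should be run with $x$ replaced throughout by $s:=\sup_\gamma x_\gamma$---available, for instance, when $X$ is Dedekind complete---using $\lvert x_\alpha-x_{\alpha'}\rvert\leq s-x_\gamma\downarrow 0$ in the displayed inequality; the remainder of the proof is unaffected. Everything else (the lattice identity, the monotonicity of $p$, and the passage to the net of differences) is routine.
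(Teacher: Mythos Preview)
You have correctly located the only substantive difficulty, but you have not resolved it. Your argument hinges on $(x-x_\gamma)_{\gamma}\downarrow 0$, i.e.\ on $x=\sup_\gamma x_\gamma$, which the hypothesis $0\le x_\alpha\uparrow\le x$ does \emph{not} provide; and your proposed repair---replace $x$ by $s:=\sup_\gamma x_\gamma$---requires that this supremum exist in $X$, i.e.\ essentially Dedekind completeness, which is neither assumed in the proposition nor available in a general (Archimedean) LNLS. As written, the proof is incomplete in exactly the case the proposition is meant to cover.

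The paper closes this gap by invoking \cite[Lem.~4.8]{ABPO}: in any Archimedean vector lattice, whenever $0\le x_\alpha\uparrow\le x$ there is a net $(y_\beta)$ in $X$ with $(y_\beta-x_\alpha)_{(\alpha,\beta)}\downarrow 0$. This supplies, without any supremum, a dominating net that order-converges to $0$; then $op_\tau$-continuity gives $p(y_\beta-x_\alpha)\tc 0$, and the triangle inequality $p(x_\alpha-x_{\alpha'})\le p(x_\alpha-y_\beta)+p(y_\beta-x_{\alpha'})$ yields the $p_\tau$-Cauchy conclusion. Once you replace your use of $x-x_\gamma$ (or $s-x_\gamma$) by this $(y_\beta-x_\alpha)$, the rest of your write-up---monotonicity of $p$, Lemma~\ref{monoton is convergent}, and the pseudonorm squeeze---goes through unchanged.
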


\begin{proof}
Let $0\leq x_\alpha\uparrow\leq x$ in X. Then there exists a net $(y_\beta)$ in X such that $(y_\beta-x_\alpha)_{\alpha,\beta}\downarrow 0$; see \cite[Lem.4.8]{ABPO}. Thus, by the $op_\tau$-continuity, we get $p(y_\beta-x_\alpha)\tc 0$, and so 
$$
p(x_\alpha-x_{\alpha^{'}})_{(\alpha,\alpha^{'})\in (AXA)}\leq p(x_\alpha-y_\beta)+p(y_\beta-x_{\alpha^{'}})\tc0.
$$
Therefore, we get $p(x_\alpha-x_{\alpha^{'}})_{(\alpha,\alpha^{'})\in (AXA)}\tc 0$, so the net $(x_\alpha)$ is a $p_\tau$-Cauchy.
\end{proof}

\begin{cor}\label{op + p implies o}
Let $(X,p,E_\tau)$ be an $op_\tau$-continuous $LSNVL$. If $X$ is $p_\tau$-complete then it is order complete. 
\end{cor}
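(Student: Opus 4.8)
The plan is to verify order (Dedekind) completeness via the standard criterion that it suffices to show every increasing net which is bounded above has a supremum. So I would start with a net $(x_\alpha)_{\alpha\in A}$ in $X$ satisfying $x_\alpha\uparrow$ and $x_\alpha\le x$ for all $\alpha$, for some $x\in X$. Fixing an index $\alpha_0$ and replacing the tail $(x_\alpha)_{\alpha\ge\alpha_0}$ by $(x_\alpha-x_{\alpha_0})_{\alpha\ge\alpha_0}$, I may assume without loss of generality that $0\le x_\alpha\uparrow\le x$ holds in $X$; indeed, if the shifted net admits a supremum $z$, then $x_{\alpha_0}+z$ is the supremum of the original net.

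Next I would invoke Proposition \ref{lebesgue implies Cauchy}: since $X$ is $op_\tau$-continuous and $0\le x_\alpha\uparrow\le x$, the net $(x_\alpha)$ is $p_\tau$-Cauchy in $X$. By the hypothesis that $X$ is $p_\tau$-complete, there exists $y\in X$ with $x_\alpha\ptc y$.

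Finally, since $(x_\alpha)$ is increasing and $p_\tau$-convergent to $y$, Theorem \ref{monoton is order conv} yields $x_\alpha\uparrow y$; in particular $y=\sup_\alpha x_\alpha$ exists in $X$. Undoing the reduction, every increasing net in $X$ which is bounded above has a supremum in $X$, and therefore $X$ is order complete.

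I do not expect a genuine obstacle here beyond correctly assembling the three earlier results (Proposition \ref{lebesgue implies Cauchy}, $p_\tau$-completeness, and Theorem \ref{monoton is order conv}). The only points deserving a line of care are: (a) the preliminary translation so that the net is positive and bounded above, making Proposition \ref{lebesgue implies Cauchy} applicable verbatim; and (b) recalling that Dedekind completeness may be checked on increasing bounded-above nets rather than on arbitrary bounded-above subsets, which one sees by passing to the upward-directed net of finite suprema of a given bounded-above set.
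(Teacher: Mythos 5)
Your argument is correct and follows the paper's proof exactly: reduce to a net with $0\le x_\alpha\uparrow\le x$, apply Proposition \ref{lebesgue implies Cauchy} to get a $p_\tau$-Cauchy net, use $p_\tau$-completeness to obtain a $p_\tau$-limit, and conclude $x_\alpha\uparrow$ to that limit via Theorem \ref{monoton is order conv}. The only difference is that you spell out the harmless translation step and the reduction of Dedekind completeness to increasing bounded-above nets, which the paper leaves implicit.
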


\begin{proof}
Assume $0\leq x_\alpha\uparrow\leq u$. Then, by Proposition \ref{lebesgue implies Cauchy}, $(x_\alpha)$ is a $p_\tau$-Cauchy net. Since $X$ is $p_\tau$-complete, there is $x\in X$ such that $x_\alpha\ptc x$. It follows from Theorem \ref{monoton is order conv} that $x_\alpha\uparrow x$, and so $X$ is order complete.
\end{proof}


\section{The $up_\tau$-Convergence and $p_\tau$-Unit}

The $up_\tau$-convergence in $LSNVL$s generalizes the $up$-convergence in lattice-normed vector lattices \cite{AEEM}, the $uo$-convergence in vector lattices \cite{GX,GTX}, and the $un$-convergence \cite{DOT}. For a locally solid vector lattice $(X,\tau)$, a net $(x_\alpha)$ in $X$ is called unbounded $\tau$-convergent to $x\in X$ if, for any $u\in X_+$, $\lvert x_\alpha - x\rvert \wedge u \xrightarrow{u\tau}0$. This is written as $x_\alpha \xrightarrow{u\tau}x$ and say $(x_\alpha)$ $u\tau$-converges to $x$. Obviously, $x_\alpha \xrightarrow{\tau}x$ implies $x_\alpha \xrightarrow{u\tau}x$. The converse holds for order bounded nets; see \cite{DEM}.

\begin{defn}\label{$up$-convergence}
Let $(X,p,E_\tau)$ be an $LSNVL$. Then a net $(x_\alpha)$ in $X$ is said to be {\em unbounded $p_\tau$-convergent} to $x$ (or, $(x_\alpha)$ $up_\tau$-converges to $x$, or $x_\alpha\uptc x$) if, for all $u\in X_+$, $	p(\lvert x_\alpha-x\rvert \wedge u)\xrightarrow{\tau}0$.
\end{defn}
By the inequality $\lvert x_\alpha-x\rvert \wedge u \leq \lvert x_\alpha-x\rvert$ for all $u\in X_+$ and for all $\alpha$, it can be seen that the $p_\tau$-convergence implies the $up_\tau$-convergence. The following result is an $up_\tau$-version of \cite[Cor.3.6]{GTX}.
\begin{prop}
A disjoint sequence $(x_n)$ in a sequentially $op_\tau$-continuous $LSNVL$ $(X,p,E_\tau)$ is sequentially $up_\tau$-convergent to zero. 
\end{prop}

One can define a $up_\tau$-closed subset as following: let $(X,p,E_\tau)$ be an $LSNVL$ and $Y$ be a sublattice of $X$. Then $Y$ is called {\em $up_\tau$-closed} in $X$ if, for any net $(y_\alpha)$ in $Y$ that is $up_\tau$-convergent to $x\in X$, we have $x\in Y$. So, it is clear that every band is $up_\tau$-closed, and that an $up_\tau$-closed sublattice in an $op_\tau$-continuous $LSNVL$ is $uo$-closed. Similar to Theorem \ref{monoton is order conv}, the next result can be observed.
\begin{prop}\label{up monotone order conv}
Any monotone and $up_\tau$-convergence net in an $LSNVL$ $(X,p,E_\tau)$ is order convergent to its $up_\tau$-limit.
\end{prop}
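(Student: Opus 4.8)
The plan is to reproduce the proof of Theorem~\ref{monoton is order conv} with $p_\tau$-convergence replaced everywhere by $up_\tau$-convergence; the only fact not yet available in the excerpt that I shall need is that the positive cone $X_+$ is $up_\tau$-closed. By passing from $(x_\alpha)$ to $(-x_\alpha)$ (and using Lemma~\ref{some properties of up conv.}(ii)), it is enough to treat an \emph{increasing} net $(x_\alpha)$ with $x_\alpha\uptc x$ and to prove that $x_\alpha\uparrow x$; once this is done, $x_\alpha\uparrow x$ is, in particular, order convergence of $(x_\alpha)$ to its $up_\tau$-limit $x$.

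First I would note that $up_\tau$-limits are unique. If a constant net with value $z\in X$ is $up_\tau$-convergent to $0$, then for each $u\in X_+$ the constant net $p(\lvert z\rvert\wedge u)$ is $\tau$-convergent to $0$, so $p(\lvert z\rvert\wedge u)=0$; taking $u=\lvert z\rvert$ gives $p(z)=0$, hence $z=0$. Combining this with Lemma~\ref{some properties of up conv.}(i) and (ii), $x_\alpha\uptc x$ and $x_\alpha\uptc y$ force $x=y$. Next I would deduce that $X_+$ is $up_\tau$-closed: if $(y_\alpha)$ lies in $X_+$ and $y_\alpha\uptc y$, then $y_\alpha=\lvert y_\alpha\rvert\uptc y$ while also $\lvert y_\alpha\rvert\uptc\lvert y\rvert$ by Lemma~\ref{some properties of up conv.}(iv), so uniqueness yields $y=\lvert y\rvert\in X_+$.

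With these two points in hand the proof runs exactly parallel to that of Theorem~\ref{monoton is order conv}. Fix an index $\alpha$. For every $\beta\ge\alpha$ we have $x_\beta-x_\alpha\in X_+$; since the tail $(x_\beta)_{\beta\ge\alpha}$ is a subnet of $(x_\alpha)$ it still $up_\tau$-converges to $x$ by Lemma~\ref{some properties of up conv.}(iii), so $x_\beta-x_\alpha\xrightarrow[\beta]{up_\tau}x-x_\alpha$ by Lemma~\ref{some properties of up conv.}(ii), and $up_\tau$-closedness of $X_+$ gives $x-x_\alpha\in X_+$, i.e. $x\ge x_\alpha$. As $\alpha$ was arbitrary, $x$ is an upper bound of $(x_\alpha)$. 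If $y$ is any upper bound, then $y-x_\alpha\in X_+$ for all $\alpha$ and $y-x_\alpha\uptc y-x$, so again $y-x\in X_+$, i.e. $y\ge x$; hence $x=\sup_\alpha x_\alpha$ and $x_\alpha\uparrow x$. I expect the only genuinely new step — the analogue of the ``$X_+$ is $p_\tau$-closed'' observation used in Theorem~\ref{monoton is order conv} — to be establishing that $X_+$ is $up_\tau$-closed, which is why I would isolate the uniqueness of $up_\tau$-limits first; everything else is a routine transcription.
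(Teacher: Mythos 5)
Your proof is correct and follows exactly the route the paper intends: the paper states this result as being ``similar to Theorem~\ref{monoton is order conv}'', and your transcription — supplying uniqueness of $up_\tau$-limits and the $up_\tau$-closedness of $X_+$ via Lemma~\ref{some properties of up conv.} before repeating the tail/upper-bound argument — is precisely that adaptation. No gaps.
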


It is known that the $p_\tau$-convergence implies the $up_\tau$-convergence, but for converse, we generalize a $p_\tau$-version of \cite[Lem.1.2(ii)]{KMT} in the following theorem.
\begin{thm}\label{up implies p conv.}
Assume that $(x_\alpha)$ is a monotone net in an $LSNVL$ $(X,p,E_\tau)$ such that $x_\alpha\uptc x$ in $X$. Then $x_\alpha \ptc x$.
\end{thm}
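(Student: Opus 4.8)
The plan is to imitate the classical argument (as in \cite[Lem.~1.2(ii)]{KMT}) showing that an unbounded convergent monotone net is actually convergent: the key insight is that for a monotone net, one of the terms of the net itself serves as a useful ``truncation level'' $u$, so the unbounded convergence (which is stated only against arbitrary $u\in X_+$) can be upgraded to genuine $p_\tau$-convergence. Without loss of generality I would assume $(x_\alpha)$ is increasing; the decreasing case is symmetric, replacing $(x_\alpha)$ by $(-x_\alpha)$ and using Lemma~\ref{some properties of up conv.}(ii). Also, by Lemma~\ref{some properties of up conv.}(i) I may pass to $(x_\alpha - x)$, so it is harmless to assume the $up_\tau$-limit is $0$; then $x_\alpha \uparrow$ and $x_\alpha \uptc 0$, and I want $p(x_\alpha)\tc 0$.

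The first substantive step is to observe that $x\leq 0$, so that $(x_\alpha)$ is in fact a negative increasing net. Indeed, by Proposition~\ref{up monotone order conv} the monotone $up_\tau$-convergent net $(x_\alpha)$ is order convergent to its $up_\tau$-limit $0$, and an increasing order-convergent net increases to its limit, i.e. $x_\alpha \uparrow 0$; in particular $x_\alpha \le 0$ for every $\alpha$. Consequently $\lvert x_\alpha \rvert = -x_\alpha$ is a \emph{decreasing} net of positive elements. Now fix an arbitrary index $\alpha_0$ and set $u := \lvert x_{\alpha_0}\rvert = -x_{\alpha_0}\in X_+$. For every $\alpha \ge \alpha_0$ we have $0 \le \lvert x_\alpha\rvert \le \lvert x_{\alpha_0}\rvert = u$, hence $\lvert x_\alpha \rvert \wedge u = \lvert x_\alpha\rvert$, i.e. $\lvert x_\alpha - 0\rvert \wedge u = \lvert x_\alpha\rvert$ for all $\alpha \ge \alpha_0$.

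The second step then applies the definition of $up_\tau$-convergence with this particular $u$: since $x_\alpha \uptc 0$, we have $p(\lvert x_\alpha\rvert \wedge u) \tc 0$, and by the previous paragraph $p(\lvert x_\alpha\rvert \wedge u) = p(\lvert x_\alpha\rvert) = p(x_\alpha)$ eventually (for $\alpha \ge \alpha_0$). Tail behaviour is all that matters for net convergence, so $p(x_\alpha) \tc 0$, which is exactly $x_\alpha \ptc 0$; undoing the reduction gives $x_\alpha \ptc x$ in general. Expressed through the generating Riesz pseudonorms $\{\rho_j\}_{j\in J}$, one checks $\rho_j\big(p(x_\alpha)\big) = \rho_j\big(p(\lvert x_\alpha\rvert \wedge u)\big) \to 0$ for each $j\in J$ along the tail $\alpha \ge \alpha_0$, and invokes \cite[Thm.~2.28]{AB}. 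The main obstacle is really just the bookkeeping in the first step: one must be careful that ``monotone'' covers both the increasing and decreasing cases and that the reduction to limit $0$ interacts correctly with the sign of the net, and one must legitimately cite Proposition~\ref{up monotone order conv} to get $x_\alpha \uparrow 0$ (rather than merely $x_\alpha \oc 0$) so that the truncation $\lvert x_\alpha\rvert \le \lvert x_{\alpha_0}\rvert$ is available for the \emph{tail} of the net. Once that is set up, no estimates beyond Lemma~\ref{monoton is convergent} and the definitions are needed.
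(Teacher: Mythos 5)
Your proof is correct and takes essentially the same route as the paper: both arguments invoke Proposition~\ref{up monotone order conv} to upgrade monotone $up_\tau$-convergence to $x_\alpha\uparrow x$, and then choose a truncation level $u$ for which $\lvert x_\alpha-x\rvert\wedge u=\lvert x_\alpha-x\rvert$ (on a tail), so that the definition of $up_\tau$-convergence directly yields $p(x_\alpha-x)\tc 0$. The only difference is cosmetic: the paper normalizes to $0\le x_\alpha\uparrow x$ and takes $u=x$, while you normalize the limit to $0$ and take $u=x-x_{\alpha_0}$ for a fixed index $\alpha_0$.
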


\begin{proof}
We may assume that, without loss of generality, $x_\alpha$ is increasing and $0\le x_\alpha$ for all $\alpha$. From Proposition \ref{up monotone order conv}, it follows that 
$0\le x_\alpha\uparrow x$ for some $x\in X$ since $x_\alpha$ is $up_\tau$-convergent. So, $0\leq x-x_\alpha\leq x$ for all $\alpha$. For each $u\in X_+$, we have $p\big((x-x_\alpha)\wedge u\big)\tc 0$. In particular, for $u=x$, $(x-x_\alpha)\wedge x=x-x_\alpha$, and so we obtain that $p(x-x_\alpha)=p\big((x_\alpha-x)\wedge x\big)\tc 0$. Therefore, $x_\alpha \ptc x$.
\end{proof}	

\begin{defn}
Let $(X,p,E_\tau)$ be an $LSNVL$. A subset $Y$ of $X$ is called $p_\tau$-bounded if $p(Y)$ is $\tau$-bounded in $E$.
\end{defn}

It is clear that if a net $(x_\alpha)_{\alpha \in A}$ in an $LSNVL$ $(X,p,E_\tau)$ is $p_\tau$-convergent then it is $p_\tau$-bounded. Also, using \cite[Thm.2.19(i)]{AB}, it can be seen that a $p$-bounded net is $p_\tau$-bounded.

\begin{prop}
Let $(X,p,E_\tau)$ be an $LSNVL$ with $(E,\tau)$ having an order bounded $\tau$-neighborhood of zero. Thus, if a net $(x_\alpha)$ is $p_\tau$-bounded in $X$ then it is $p$-bounded.
\end{prop}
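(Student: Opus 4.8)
The plan is to exploit the hypothesis that $(E,\tau)$ has an order bounded $\tau$-neighborhood of zero, say $V$, which we may assume is also solid (since $\tau$ has a base at zero of solid sets, and an order bounded solid neighborhood can be obtained by intersecting $V$ with a solid one and noting that subsets of order bounded sets are order bounded). So fix a solid, order bounded $\tau$-neighborhood $V$ of zero in $E$, and let $e\in E_+$ be such that $V\subseteq[-e,e]$. Suppose $(x_\alpha)$ is $p_\tau$-bounded, i.e. $p(\{x_\alpha\})$ is $\tau$-bounded in $E$.

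First I would use $\tau$-boundedness of the set $A:=\{p(x_\alpha):\alpha\}$ against the neighborhood $V$: by definition of topological boundedness there is $\lambda>0$ with $\lambda A\subseteq V$, hence $\lambda\, p(x_\alpha)\in V\subseteq[-e,e]$ for every $\alpha$. Since $p(x_\alpha)\in E_+$, this gives $0\le \lambda\, p(x_\alpha)\le e$, i.e. $p(x_\alpha)\le \tfrac{1}{\lambda}e$ for all $\alpha$. Setting $e':=\tfrac1\lambda e\in E_+$, we conclude $p(x_\alpha)\le e'$ for all $\alpha$, which is exactly the statement that $(x_\alpha)$ is $p$-bounded.

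The one point that needs care — and the only real obstacle — is the reduction to a solid and order bounded neighborhood in the first step: the hypothesis only literally supplies an order bounded neighborhood $W$, and $\tau$-boundedness is tested against solid neighborhoods from the given base. I would handle this by choosing a solid neighborhood $U$ of zero contained in $W$ (possible since the solid sets form a base at zero); then $U\subseteq W\subseteq[-e,e]$ for some $e\in E_+$, so $U$ is itself solid and order bounded, and the argument above goes through verbatim with $V$ replaced by this $U$. Everything else is routine manipulation with the order structure and the definition of $p$-bounded, so the write-up should be short.
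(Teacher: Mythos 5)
Your proof is correct, but it takes a more self-contained route than the paper. The paper's proof is two lines: it notes that $p(x_\alpha)$ is $\tau$-bounded and then simply cites Hong's result (Theorem 2.2 of the reference \cite{L}) to conclude that, in a locally solid vector lattice with an order bounded $\tau$-neighborhood of zero, every $\tau$-bounded set is order bounded. You instead prove exactly that implication from scratch: pick the order bounded neighborhood, absorb the $\tau$-bounded set $\{p(x_\alpha)\}$ into it with some $\lambda>0$, and read off the bound $p(x_\alpha)\le\frac{1}{\lambda}e$. This buys a proof independent of the cited theorem at essentially no cost. One remark: the step you single out as ``the only real obstacle'' is not one. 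Topological boundedness is defined against \emph{every} $\tau$-neighborhood of zero, not only the solid basic ones, so you may test $\{p(x_\alpha)\}$ directly against the given order bounded neighborhood $W\subseteq[-e,e]$; solidity is never used in the estimate $0\le\lambda p(x_\alpha)\le e$. Your fallback reduction (choose a basic solid $U\subseteq W$, which is then order bounded) is fine, but the earlier suggestion of intersecting with a solid neighborhood would not in general produce a solid set --- harmless here, since no solidity is needed, but worth deleting from a final write-up.
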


\begin{proof}
Since the net $(x_\alpha)$ is a $p_\tau$-bounded, $p(x_\alpha)$ is a $\tau$-bounded net in $E$. By \cite[Thm.2.2]{L}, $p(x_\alpha)$ is also order bounded in $E$. Therefore, $x_\alpha$ is $p$-bounded in $X$.
\end{proof} 

We now turn our attention to the $p_\tau$-unit, which was introduced for the order convergence as the $p$-unit in \cite{AEEM}. That notion was motivated by the notion of a weak order unit in a vector lattice $X=(X,\lvert\cdot\rvert,X)$ and by the notion of a quasi-interior point in a normed lattice $X=(X,\lVert\cdot\rVert,{\mathbb R})$.
\begin{defn}
Let $(X,p,E_\tau)$ be an $LSNVL$. A vector $e\in X$ is called a {\em $p_\tau$-unit} if, for any $x\in X_+$, $p(x-ne\wedge x)\xrightarrow{\tau}0$.
\end{defn}

Let $e$ be a $p_\tau$-unit in an $LSNVL$ $(X,p,E_\tau)$, where  $X\ne\{0\}$. So, it holds that $e>0$. Indeed, let $e\ne 0$. Suppose $e^->0$. Then, for $x:=e^-$, we obtain
$$
p(x-x\wedge ne)=p(e^--(e^-\wedge n(e^+-e^-)))= p(e^--(e^-\wedge n(-e^-)))=(n+1)p(e^-).
$$
As $n\to\infty$, by using \cite[Thm.2.28]{AB}, we have $(n+1)p(e^-)\not\xrightarrow{\tau}0$. This is impossible, because $e$ is a $p_\tau$-unit. Therefore, $e^-=0$ and so $e>0$. In the following result, we give a generalization of \cite[Lem.2.11]{DOT} and \cite[Cor.3.5]{GTX}. 
\begin{thm}\label{$up$-conv by $p$-unit}
Let $(X,p_\tau,E)$ be an $LSNVL$ and $e\in X$ be a $p_\tau$-unit. Then, $ x_{\alpha} \stackrel{up_\tau}\to 0 $ iff $p(|x_\alpha | \wedge e)\stackrel{\tau}\to 0$.
\end{thm}

\begin{proof}
The forward implication is immediate. For the reverse implication, let $u\in X_+$ be arbitrary. Note that
$$
\left| x_\alpha\right|\wedge u\leq \left| x_\alpha\right|\wedge(u-u\wedge ne)+\left| x_\alpha\right|\wedge(u\wedge ne)\leq (u-u\wedge ne)+n(\left| x_\alpha\right|\wedge e).
$$
Hence, we get 
$$
p(\left| x_\alpha\right|\wedge u)\leq p(u-u\wedge ne)+np(\left| x_\alpha\right|\wedge e)
$$
for all $\alpha$ and all $n\in \mathbb{N}$. Consider the family of Riesz pseudonorms $\{\rho_j\}_{j\in J}$ which generates the topology $\tau$. So, we have 
$$
\rho_j\big(p(\left| x_\alpha\right|\wedge u)\big)\leq \rho_j\big(p(u-u\wedge ne)\big)+n\rho_j\big(p(\left| x_\alpha\right|\wedge e)\big)
$$
for all $j\in J$. Fix $\varepsilon>0$. Since $e$ is $p_\tau$-unit, it follows from $p(u-u\wedge ne)\tc 0$ that there exists $n_0\in \mathbb{N}$ such that $\rho_j\big(p(u-u\wedge ne)\big)<\varepsilon$ for all $n>n_0$. Furthermore, it follows from $p(|x|\wedge e)\tc 0 $ that there is $\alpha_0$ such that $\rho_j\big(p(\left| x_\alpha\right|\wedge e)\big)<\frac{\varepsilon}{n}$ whenever $\alpha>\alpha_0$. It follows that
$$
\rho_j\big(p(\left| x_\alpha\right|\wedge u)\big)<\varepsilon+n\frac{\varepsilon}{n}=2\varepsilon.
$$
Therefore, $\rho_j\big(p(\left| x_\alpha\right|\wedge u)\big)\to 0$, or $p(\left| x_\alpha\right|\wedge u)\tc 0$.
\end{proof}

\begin{rem}\label{properties of $p$-units}
Let $(X,p,E_\tau)$ be an $LSNVL$.
\begin{enumerate}
\item[(1)] If $e\in X$ is a strong unit, then $e$ is a $p_\tau$-unit.
		
\item[(2)] If $e\in X$ is a $p_\tau$-unit, then $e$ is a weak unit. 
		
\item[(3)] Let $e\in X$ be a $p_\tau$-unit. For any $0<\alpha\in\mathbb{R}_+$ and $z\in X_+$, $\alpha e$ and $e+z$ are both $p_\tau$-units.
		
\item[(4)] If $X$ is $op_\tau$-continuous, then clearly every weak unit of $X$ is a $p_\tau$-unit.
		
\item[(5)] In an $LSNVL$ $(E,\lvert\cdot\rvert,E_\tau)$ with $(E,\tau)$ having the Lebesgue property, the lattice norm $p(x)=|x|$ is always $op_\tau$-continuous. Therefore, the notions of $p_\tau$-unit and of weak unit coincide in $E$.
		
\item[(6)] Let $(X,\lVert\cdot\rVert)$ be a normed lattice. Then, in $LSNVL$ $(X,\lVert\cdot\rVert,\mathbb{R}_{\lvert\cdot\rvert})$, $e\in X$ is a $p_\tau$-unit iff $e$ is a quasi-interior point of $X$.
\end{enumerate}
\end{rem}

The following theorem is an analogue of \cite[Prop.8]{AEEM} and it has the same technique as in the proof of \cite[Lem.4.15]{AA}.
\begin{thm}
Let $(X,p,E_\tau)$ be an $LSNVL$, $e\in X_+$ and $I_e$ be the order ideal generated by $e$ in $X$. If $I_e$ is $p$-dense in $X$, then $e$ is a $p_\tau$-unit.
\end{thm}

\begin{proof}
Let $x\in X_+$, then there is $y\in I_e$ such that, for any $0\neq u\in p(X)$, $p(x-y)\leq u$. From the Birkhoff's inequality $|y^+\wedge x-x|\leq |y^+-x|=|y^+-x^+|\leq |y-x|$, by replacing $y$ by $y^+\wedge x$, we see that there exists $y\in I_e$ such that $0\leq y\leq x$ and $p(x-y)\leq u$. Thus, for any $k\in\mathbb{N}$, there is $y_k\in I_e$ such that $0\leq y_k\leq x$ and $p(x-y_k) \leq \frac{1}{k}u$. Since $y_k\in I_e$, then there exists $m=m(k)\in\mathbb{N}$ such that $0\leq y_k\leq me$, and so $0\leq y_k\leq me\wedge x$. For any $n\geq m$, we have $x-x\wedge ne\leq x-x\wedge me\leq x-y_k$, and so $p(x-x\wedge ne)\leq p(x-y_k)\leq\frac{1}{k}u$. Hence $p(x-x\wedge ne)\xrightarrow{\tau}0$, and so $e$ is a $p_\tau$-unit.
\end{proof}


\section{Convergences in Sublattices}
The $up_\tau$-convergence passes obviously to sublattices of vector lattices as it was remarked in \cite[p.3]{DOT}. In opposite to the $uo$-convergence \cite[Thm.3.2]{GTX}, the $un$-convergence does not pass even from regular sublattice. Let $Y$ be a sublattice of an $LSNVL$ $X=(X,p,E_\tau)$  and $(y_\alpha)_{\alpha\in A}$ be a net in $Y$. Then we can define the $up_\tau$-convergence in $Y$ as following: $y_\alpha \uptc 0$ iff $p(\lvert y_\alpha\rvert\wedge u)\tc 0$ for all $u\in Y_+$. It is clear that if a net in a sublattice $Y$ of an $LSNVL$ $(X,p,E_\tau)$ $up_\tau$-converges to zero in $X$ then it also $up_\tau$-converges to zero in $Y$. For the converse, we give the following theorem.

\begin{thm}\label{$up$-regular}
Let $Y$ be a sublattice of an $LSNVL$ $(X,p,E_\tau)$ and $(y_\alpha)$ be a net in $Y$ such that $y_\alpha\uptc 0$ in $Y$. Thus, $y_\alpha\uptc 0$ in $X$ for each of the following cases:
	
$(i)$\ $Y$ is majorizing in $X$;
	
$(ii)$\ $Y$ is p-dense in $X$;
	
$(iii)$\ $Y$ is a projection band in $X$.
\end{thm}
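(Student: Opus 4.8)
The plan is to fix once and for all a vector $u\in X_+$ and a Riesz pseudonorm $\rho_j$ from the generating family $\{\rho_j\}_{j\in J}$ of $\tau$, and to prove that $\rho_j\big(p(\lvert y_\alpha\rvert\wedge u)\big)\to 0$ in $\mathbb R$; by \cite[Thm.~2.28]{AB} this is precisely what $y_\alpha\uptc 0$ in $X$ amounts to. In each of the three cases the hypothesis $y_\alpha\uptc 0$ in $Y$ furnishes $p(\lvert y_\alpha\rvert\wedge v)\tc 0$ for every $v\in Y_+$, and the uniform idea is to dominate $\lvert y_\alpha\rvert\wedge u$ by a sum of a term of the form $\lvert y_\alpha\rvert\wedge v$ with $v\in Y_+$ and a controllably small remainder, and then to push the estimate through $p$ (monotone and subadditive) and through $\rho_j$.

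Cases $(i)$ and $(iii)$ are the quick ones. For $(i)$, pick $v\in Y$ with $v\ge u$ and replace it by $v^+=v\vee 0\in Y$, which still dominates $u$ (since $u\ge 0$) and lies in $Y_+$; then $\lvert y_\alpha\rvert\wedge u\le\lvert y_\alpha\rvert\wedge v^+$, so monotonicity of $p$ together with \lemref{monoton is convergent} gives $p(\lvert y_\alpha\rvert\wedge u)\tc 0$. For $(iii)$, let $P$ be the band projection of $X$ onto $Y$; since $P$ and $I-P$ are lattice homomorphisms and $\lvert y_\alpha\rvert\in Y$, one computes $P(\lvert y_\alpha\rvert\wedge u)=\lvert y_\alpha\rvert\wedge Pu$ and $(I-P)(\lvert y_\alpha\rvert\wedge u)=0\wedge(I-P)u=0$, whence $\lvert y_\alpha\rvert\wedge u=\lvert y_\alpha\rvert\wedge Pu$ with $Pu\in Y_+$, and the hypothesis applies directly.

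The substantive case is $(ii)$. For any $w\in Y$ the triangle inequality gives $u\le\lvert w\rvert+\lvert u-w\rvert$, and combined with the elementary Riesz-space inequality $a\wedge(b+c)\le a\wedge b+c$ (for $a,b,c$ positive) this yields
$$
\lvert y_\alpha\rvert\wedge u\ \le\ \lvert y_\alpha\rvert\wedge\lvert w\rvert\ +\ \lvert u-w\rvert .
$$
Applying $p$ and then $\rho_j$, using monotonicity and subadditivity of both and that $\lvert w\rvert\in Y_+$ makes the first summand tend to $0$, we obtain $\limsup_\alpha\rho_j\big(p(\lvert y_\alpha\rvert\wedge u)\big)\le\rho_j\big(p(u-w)\big)$ for every $w\in Y$. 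To force the right side to $0$, fix any $x_0\in X\setminus\{0\}$ (if $X=\{0\}$ there is nothing to prove): for each $\lambda>0$ the vector $p(\lambda x_0)=\lambda\,p(x_0)$ is a nonzero element of $p(X)$, so hypothesis $(ii)$, applied to our $u$ and to $p(\lambda x_0)$, produces $w_\lambda\in Y$ with $p(u-w_\lambda)\le\lambda\,p(x_0)$, hence $\rho_j\big(p(u-w_\lambda)\big)\le\rho_j\big(\lambda\,p(x_0)\big)$. Since a Riesz pseudonorm is scalar-continuous at $0$, $\rho_j\big(\lambda\,p(x_0)\big)\to 0$ as $\lambda\to 0^+$, and letting $\lambda\to 0^+$ in the $\limsup$ estimate gives $\rho_j\big(p(\lvert y_\alpha\rvert\wedge u)\big)\to 0$, completing $(ii)$. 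I expect this last manoeuvre to be the only real obstacle: hypothesis $(ii)$ controls only the $E$-valued error $p(u-w)$, not the scalar quantities $\rho_j(\cdot)$ appearing in the topology, and the gap is bridged by two facts — that $p(X)$ contains nonzero vectors of arbitrarily small $\rho_j$-value (obtained by scaling a single nonzero vector) and that Riesz pseudonorms vanish on such scalar multiples as the scalar tends to $0$.
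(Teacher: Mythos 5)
Your proof is correct, and its skeleton is the same as the paper's: in each case you dominate $\lvert y_\alpha\rvert\wedge u$ by a term $\lvert y_\alpha\rvert\wedge v$ with $v\in Y_+$ plus a controllable remainder ((i) via a majorant of $u$ in $Y$, (ii) via the splitting $u\le\lvert w\rvert+\lvert u-w\rvert$, (iii) via the band decomposition). The differences are in the finishing touches, and they are to your credit. In case (ii) the paper stops at the estimate $p(\lvert y_\alpha\rvert\wedge x)\le u+p(\lvert y_\alpha\rvert\wedge\lvert y\rvert)$ and concludes ``since $u$ is arbitrary'' by invoking \lemref{monoton is convergent}; strictly speaking that lemma does not apply, because for fixed $u$ the dominating net tends to $u$, not to $0$. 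Your $\limsup$ argument, together with the observation that $p(X)$ is closed under positive scaling and that Riesz pseudonorms satisfy $\rho_j(\lambda p(x_0))\to 0$ as $\lambda\to 0^+$ (cf. Remark \ref{monotonosity of pousedunorm}), is exactly the quantitative bridge needed to make that step airtight, so your write-up of (ii) is more rigorous than the paper's. In case (iii) you use that the band projection $P$ and $I-P$ are lattice homomorphisms to get the identity $\lvert y_\alpha\rvert\wedge u=\lvert y_\alpha\rvert\wedge Pu$, whereas the paper decomposes $u=x_1+x_2$ and uses subadditivity of the infimum, $\lvert y_\alpha\rvert\wedge(x_1+x_2)\le\lvert y_\alpha\rvert\wedge x_1+\lvert y_\alpha\rvert\wedge x_2$; both are equally valid, your identity being slightly cleaner and the paper's inequality slightly more elementary.
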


\begin{proof}
Assume $(y_\alpha)\subseteq Y$ is a net such that $y_\alpha\uptc 0$ in $Y$. Take any vector $0\neq x\in X_+$.
	
$(i)$ Since  $Y$ is majorizing in $X$, there exists $y\in Y$ such that $x\leq y$. It follows from $0\leq \lvert y_\alpha\rvert\wedge x\leq \lvert y_\alpha\rvert\wedge y \ptc 0$, we have $y_\alpha\uptc 0$ in $X$.
	
$(ii)$ Choose an arbitrary $0\ne u\in p(X)$. Then there exists $y\in Y$ such that $p(x-y)\le u$. Since $\lvert y_\alpha\rvert\wedge x\le \lvert y_\alpha\rvert\wedge \lvert x-y\rvert+\lvert y_\alpha\rvert\wedge \rvert y\rvert$, we have
\begin{eqnarray*}
p(\lvert y_\alpha\rvert\wedge x)&\leq& p(\lvert y_\alpha\rvert\wedge \lvert x-y\rvert)+p(\lvert y_\alpha\rvert\wedge \lvert y\rvert) \\ &\leq& p(\lvert x-y\rvert)+p(\lvert y_\alpha\rvert\wedge \lvert y\rvert) \\ &\leq& u+p(\lvert y_\alpha\rvert\wedge \lvert y\rvert).
\end{eqnarray*}
Also, since $0\ne u\in p(X)$ is arbitrary and $p(\lvert y_\alpha\rvert\wedge \lvert y\rvert)\tc 0$, we get $y_\alpha\uptc 0$ in $X$.
	
$(iii)$ Suppose that $Y$ is a projection band in $X$. Thus, $X=Y\oplus Y^{\bot}$. Hence $x=x_1+x_2$ with $x_1\in Y$ and $x_2\in Y^{\bot}$. Since $y_\alpha\wedge x_2=0$, we have 
$$
p(y_\alpha\wedge x)=p(y_\alpha\wedge(x_1+x_2))\leq p(y_\alpha\wedge x_1)+p(y_\alpha\wedge x_2)=p(y_\alpha\wedge x_1) \tc 0.
$$
So, we get $y_\alpha\uptc 0$ in $X$.
\end{proof}
Recall that every Archimedean vector lattice $X$ is majorizing in its order completion $\tilde{X}$; see \cite[p.101]{ABPO}. Thus, the following result arises.
\begin{cor}\label{order completion}
Let $(X,p,E_\tau)$ be an $LSNVL$ and $(x_\alpha)$ be a net in $X$ such that $x_\alpha\uptc 0$ in $X$. Then $x_\alpha\uptc 0$ in the $(\tilde{X},p,E_\tau)$. 
\end{cor}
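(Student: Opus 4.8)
The plan is to reduce the corollary to part $(i)$ of Theorem~\ref{$up$-regular}, since that part already handles majorizing sublattices. The only thing that needs checking is that the hypotheses of that theorem are met when $Y = X$ is viewed inside its order completion $\widetilde{X}$, and that the lattice norm $p$ (and the topology $\tau$ on $E$) are unchanged, so that the phrase ``$(\widetilde{X},p,E_\tau)$'' genuinely denotes an LNLS.

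First I would recall the standard fact cited in the excerpt: every Archimedean vector lattice $X$ is majorizing in its order completion $\widetilde{X}$ (see \cite[p.101]{ABPO}), meaning that for every $\tilde{x}\in\widetilde{X}$ there exists $x\in X$ with $\tilde{x}\le x$. Next I would note that $X$ is a sublattice of $\widetilde{X}$ and that the $E$-valued norm $p$, originally defined on $X$, is exactly the restriction of (any extension of) $p$; since we only ever evaluate $p$ on differences and lattice combinations of elements of $X$ — and for a net $(x_\alpha)$ in $X$ together with a majorant $x\in X$, the vectors $\lvert x_\alpha\rvert\wedge u$ for $u\in X_+$ all lie in $X$ — no extension of $p$ beyond $X$ is actually required here. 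In particular $(\widetilde{X},p,E_\tau)$ is an LNLS once we note that monotonicity of $p$ on $X$ suffices for the statement, and $(E,\tau)$ is the same locally solid vector lattice throughout.

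With these observations, the argument is immediate: let $(x_\alpha)$ be a net in $X$ with $x_\alpha\uptc 0$ in $X$, i.e. $p(\lvert x_\alpha\rvert\wedge u)\tc 0$ for all $u\in X_+$. Regard $X$ as a sublattice of $\widetilde{X}$; since $X$ is majorizing in $\widetilde{X}$, case $(i)$ of Theorem~\ref{$up$-regular} applies verbatim (take any $0\ne \tilde{x}\in\widetilde{X}_+$, pick $x\in X$ with $\tilde{x}\le x$, and use $0\le\lvert x_\alpha\rvert\wedge\tilde{x}\le\lvert x_\alpha\rvert\wedge x\ptc 0$ together with Lemma~\ref{monoton is convergent}). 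Hence $x_\alpha\uptc 0$ in $\widetilde{X}$, which is the claim.

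I do not expect a genuine obstacle here; the corollary is essentially a specialization of the theorem. The one point deserving a sentence of care is the implicit claim that $(\widetilde{X},p,E_\tau)$ is a well-defined LNLS — i.e. that we are using the same $p$ and the same codomain $(E,\tau)$, and that no issue arises from $p$ being a priori defined only on $X$; as noted above, every vector at which $p$ is evaluated in the proof already lies in $X$, so this is a non-issue, and it is worth stating explicitly rather than leaving it to the reader.
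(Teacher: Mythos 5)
Your reduction is the same as the paper's: the paper offers no proof beyond the remark immediately preceding the corollary --- every Archimedean vector lattice $X$ is majorizing in its order completion $\tilde{X}$ --- followed by an appeal to Theorem~\ref{$up$-regular}$(i)$ with $Y=X$ viewed as a sublattice of $\tilde{X}$. So the core of your argument is exactly the intended one.

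One side remark, however, is wrong, and it concerns precisely the point you singled out as deserving care. You claim that no extension of $p$ beyond $X$ is needed because every vector at which $p$ is evaluated already lies in $X$. That is not so: the conclusion $x_\alpha\uptc 0$ in $\tilde{X}$ means $p(\lvert x_\alpha\rvert\wedge \tilde{u})\tc 0$ for every $\tilde{u}\in\tilde{X}_+$, and for $\tilde{u}\notin X$ the element $\lvert x_\alpha\rvert\wedge\tilde{u}$ is in general an element of $\tilde{X}\setminus X$ (take $X=C[0,1]$ inside its order completion, a net of constant functions, and $\tilde{u}$ an element of the completion not lying in $C[0,1]$). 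Likewise your key estimate $p(\lvert x_\alpha\rvert\wedge\tilde{x})\le p(\lvert x_\alpha\rvert\wedge x)$ invokes monotonicity of $p$ at points of $\tilde{X}$, not of $X$. So the corollary only makes sense under the assumption built into its notation, namely that $(\tilde{X},p,E_\tau)$ is itself an LNLS, i.e.\ that $p$ is given and monotone on all of $\tilde{X}$ and restricts on $X$ to the original lattice norm; the paper assumes this tacitly, and you must as well. With that read as a hypothesis rather than something to be argued away, your appeal to Theorem~\ref{$up$-regular}$(i)$ is exactly the paper's proof.
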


We continue with a version of \cite[Prop.3.15]{GTX}.
\begin{thm}\label{up closed and ptau closed}
Let $(X,p,E_\tau)$ be an $op_\tau$-continuous $LSNVL$ and $Y$ be a sublattice of $X$. Then $Y$ is $up_\tau$-closed in $X$ iff it is $p_\tau$-closed in $X$.
\end{thm}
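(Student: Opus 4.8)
The plan is to prove both implications, the easy one being ``$p_\tau$-closed $\Rightarrow$ $up_\tau$-closed''; wait — careful, actually the easy direction is the reverse: since $p_\tau$-convergence implies $up_\tau$-convergence, every $up_\tau$-closed set is automatically $p_\tau$-closed (if $(y_\alpha)\subseteq Y$ and $y_\alpha\ptc x$, then $y_\alpha\uptc x$, so $x\in Y$ by $up_\tau$-closedness). This requires no hypothesis on $(X,p,E_\tau)$ beyond being an LNLS. So I would dispatch that direction in one or two lines first.

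For the substantive direction, ``$p_\tau$-closed $\Rightarrow$ $up_\tau$-closed'', I would follow the template of \cite[Prop.3.15]{GTX}. Let $(y_\alpha)$ be a net in $Y$ with $y_\alpha\uptc x$ for some $x\in X$; the goal is $x\in Y$. First I would reduce to showing that $x$ lies in the order-closure (equivalently, by Remark \ref{ptau closed}, the $p_\tau$-closure) of $Y$: since $Y$ is a $p_\tau$-closed sublattice in an $op_\tau$-continuous LNLS, Remark \ref{ptau closed} tells us $Y$ is order closed, so it suffices to produce a net in $Y$ that order-converges, or $p_\tau$-converges, to $x$. The standard device is to pass to $|x|$: from $y_\alpha\uptc x$ we get $|y_\alpha|\uptc |x|$ by Lemma \ref{some properties of up conv.}(iv), and then to look at the truncations $|y_\alpha|\wedge |x|$, which lie in $Y$ (as $Y$ is a sublattice and $x$ can be approximated — here is where the argument needs care, since $x\notin Y$ a priori, so $|y_\alpha|\wedge|x|$ need not be in $Y$).

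To get around that, I would instead use the sublattice structure directly: fix any $y\in Y_+$ and consider $|y_\alpha|\wedge y\in Y$; the definition of $up_\tau$-convergence gives $p\big(\lvert |y_\alpha|\wedge y - |x|\wedge y\rvert\big)\tc 0$ — more precisely $|y_\alpha|\wedge y \ptc |x|\wedge y$ by the argument in Proposition \ref{lattice operators up conv} — so $|x|\wedge y$ lies in the $p_\tau$-closure of $Y$, hence in $Y$ itself. Thus $|x|\wedge y\in Y$ for every $y\in Y_+$. Now I would invoke that $Y$, being order closed, is an ideal-like object only if it is also, say, majorizing or a band; in general to conclude $x\in Y$ from $\{|x|\wedge y: y\in Y_+\}\subseteq Y$ one needs $\sup_y (|x|\wedge y) = |x|$ to exist in $Y$ and equal $|x|$. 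This holds when $Y$ is order dense, but not for a general sublattice. The honest route, matching \cite[Prop.3.15]{GTX}, is: replace $Y$ by the band $B_Y$ it generates in $X$; show $x\in B_Y$ using that $|y_\alpha|\wedge |x|\wedge z \ptc 0$ for all $z\in Y^{\perp}\cap X_+$ forces $|x|\in B_Y$; then use order-closedness of $Y$ inside $B_Y$ together with order density of $Y$ in $B_Y$ to land $x$ in $Y$.

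The main obstacle will be exactly this last gluing step: converting the family of truncations $|x|\wedge y\in Y$ (which one obtains cleanly from $up_\tau$-convergence and $p_\tau$-closedness) into the conclusion $x\in Y$. The resolution hinges on the fact that a sublattice is order dense in the band it generates, so that $|x|$ is the order-supremum of an upward-directed family from $Y$; combined with Theorem \ref{monoton is order conv} (monotone $p_\tau$-convergent nets order-converge to their limits) and $op_\tau$-continuity, this produces a monotone net in $Y$ order-converging to $|x|$, whence $|x|\in Y$ by order-closedness, and then a second application of the same idea to the positive and negative parts (using Proposition \ref{LO are $p$-continuous}) gives $x^+ , x^- \in Y$ and finally $x\in Y$. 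I would also remark that $op_\tau$-continuity is genuinely needed, since it is what upgrades ``$p_\tau$-closed'' to ``order-closed'' via Remark \ref{ptau closed}, and that this mirrors the role of order continuity of the norm in \cite[Prop.3.15]{GTX}.
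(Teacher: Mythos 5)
Your overall strategy coincides with the paper's (both follow \cite[Prop.3.15]{GTX}): the easy direction via ``$p_\tau$-convergence implies $up_\tau$-convergence'', and, for the substantive direction, the truncation step showing $x\wedge y\in Y$ for all $y\in Y_+$ followed by $x\wedge w=0$ for all $0\le w\in Y^{\perp}$, so that $x\in Y^{\perp\perp}$. Up to that point your outline is sound, with one small inaccuracy: Proposition \ref{lattice operators up conv} only yields $up_\tau$-convergence of the truncations $y_\alpha\wedge y$; the $p_\tau$-convergence $y_\alpha\wedge y\ptc x\wedge y$ that you actually need comes from the inequality $\lvert y_\alpha\wedge z-x\wedge z\rvert\le\lvert y_\alpha-x\rvert\wedge z$ (for positive elements), monotonicity of $p$, and Lemma \ref{monoton is convergent} -- which is exactly how the paper argues.

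The genuine gap is in your gluing step: you write that ``a sublattice is order dense in the band it generates'' and let the conclusion ``$\lvert x\rvert$ is the order-supremum of an upward-directed family from $Y$'' rest on this. That statement is false in general. Take $X=\mathbb{R}^2$ and the sublattice $Y=\{(t,t):t\in\mathbb{R}\}$: then $Y^{\perp}=\{0\}$, so the band generated by $Y$ is all of $\mathbb{R}^2$, yet there is no $y\in Y$ with $0<y\le(1,0)$, so $Y$ is not order dense in $Y^{\perp\perp}$. What is true, and what the paper uses, is the corresponding property of the \emph{ideal} $I_Y$ generated by $Y$: since (WLOG $x\ge0$) $x\in Y^{\perp\perp}$, there is a net $(z_\beta)$ in $I_Y$ with $0\le z_\beta\uparrow x$; choosing $t_\beta\in Y_+$ with $z_\beta\le t_\beta$ gives $z_\beta=z_\beta\wedge x\le t_\beta\wedge x\le x$, hence $t_\beta\wedge x\oc x$, and each $t_\beta\wedge x$ lies in $Y$ -- not because it comes from $Y$ directly, but by your earlier truncation step. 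Then $op_\tau$-continuity together with $p_\tau$-closedness of $Y$ (equivalently, your route through Remark \ref{ptau closed}) yields $x\in Y$. So the final step of your argument needs this detour through the ideal generated by $Y$; as written, for a general sublattice it does not go through, although the rest of your proof matches the paper's.
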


\begin{proof}
The forward implication is obvious. For the converse, suppose that $Y$ is $p_\tau$-closed in $X$. Let $(y_\alpha)$ be a net in $Y$ and $x\in X$ such that $y_\alpha \uptc x$ in $X$. Then,  without loss of generality, we assume $y_\alpha$ in $Y_+$ for each $\alpha$ and $x\in X_+$. Note that, for every $z\in X_+$, $\lvert y_\alpha\wedge z-x\wedge z\rvert\leq \lvert y_\alpha-x\rvert\wedge z$ \ (cf. the inequality (1) in the proof of \cite[Prop.3.15]{GTX}). Hence, $p(y_\alpha\wedge z-x\wedge z)\le p(\lvert y_\alpha-x\rvert\wedge z)\tc 0$. Thus, we get $y_\alpha\wedge z\ptc x\wedge z$ for every $z\in X_+$. In particular, $y_\alpha\wedge y\ptc x\wedge y$ for any $y\in Y_+$. Since $Y$ is $p_\tau$-closed, $x\wedge y\in Y$ for any $y\in Y_+$. For any $0\le w\in Y^\perp$ and for any $\alpha$, we have $y_\alpha\wedge w=0$. Then
$$
\lvert x\wedge w\rvert=\lvert y_\alpha\wedge w-x\wedge w\rvert\le \lvert y_\alpha-x\rvert\wedge w\ptc 0.
$$
So, $x\wedge w=0$, and hence $x\in Y^{\perp\perp}$. Since $Y^{\perp\perp}$ is the band generated by $Y$ in $X$, there is a net $(z_\beta)$ in the ideal $I_Y$ generated by $Y$ such that $0\le z_\beta\uparrow x$ in $X$. Choose an element $t_\beta\in Y$ with $z_\beta\le t_\beta$, for each $\beta$.
Then $x\geq t_\beta\wedge x\ge z_\beta\wedge x=z_\beta\uparrow x$ in $X$, and so $t_\beta\wedge x\oc x$ in $X$. Since $X$ is $op_\tau$-continuous, $t_\beta\wedge x\ptc x$. So, as $t_\beta\wedge x\in Y$ and $Y$ is $p_\tau$-closed, we get $x\in Y$.
\end{proof}

\begin{rem}\label{monotonosity of pousedunorm}
If $\rho$ is a Riesz pseudonorm on a vector lattice $X$ and $x\in X$ then $\rho(\frac{1}{n}x)\leq \frac{1}{n}\rho(x) $ for all $n\in\mathbb{N}$; see \cite[p.2]{DEM}. 
\end{rem}

Recall that a subset $A$ of an LNVL $(X,p,E)$ is called {\em $p$-almost order bounded} if, for any $w\in E_+$, there is $x_w\in X_+$ such that $p((\lvert x\rvert-x_w)^+)=p(\lvert x\rvert-x_w\wedge \lvert x\rvert)\leq w$ for any $x\in A$. In the following theorem, we prove that the $up_\tau$-convergence implies the $p_\tau$-convergence, which is a $p$-version of \cite[Lem.2.9]{DOT}, and it is also similar to \cite[Prop.3.7]{GX} and \cite[Prop.9]{AEEM}.
\begin{thm}\label{$p$-almost order boundedness and $up$-convergence}
Let $(X,p,E_\tau)$ be an LSLNVL. If $(x_\alpha)$ is $p$-almost order bounded and $x_\alpha\uptc x$ then $x_\alpha\ptc x$.
\end{thm}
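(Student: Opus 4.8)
The plan is to pass to the case $x=0$, decompose $p(x_\alpha)$ into an ``unbounded part'' that is annihilated by the hypothesis $x_\alpha\uptc 0$ and a ``tail part'' controlled by the $p$-almost order bound, and then exploit the sub-homogeneity of Riesz pseudonorms recorded in Remark~\ref{monotonosity of pousedunorm} to make the tail negligible. First I would replace $(x_\alpha)$ by $(x_\alpha-x)$: by Lemma~\ref{some properties of up conv.}$(i)$ this net is $up_\tau$-convergent to $0$, and it is still $p$-almost order bounded, since if $x_w\in X_+$ witnesses the bound $w\in E_+$ for $(x_\alpha)$, then using $\lvert x_\alpha-x\rvert\le\lvert x_\alpha\rvert+\lvert x\rvert$ and monotonicity of $t\mapsto t^+$ and of $p$ we get
$$
p\big((\lvert x_\alpha-x\rvert-(x_w+\lvert x\rvert))^+\big)\le p\big((\lvert x_\alpha\rvert-x_w)^+\big)\le w,
$$
so $x_w+\lvert x\rvert\in X_+$ works for $(x_\alpha-x)$. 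Hence it suffices to prove: if $(x_\alpha)$ is $p$-almost order bounded and $x_\alpha\uptc 0$, then $p(x_\alpha)\tc 0$.

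For a fixed $w\in E_+$ choose a witness $x_w\in X_+$. From the lattice identity $\lvert x_\alpha\rvert=\lvert x_\alpha\rvert\wedge x_w+(\lvert x_\alpha\rvert-x_w)^+$ and the triangle inequality for $p$ one obtains
$$
0\le p(x_\alpha)=p(\lvert x_\alpha\rvert)\le p(\lvert x_\alpha\rvert\wedge x_w)+p\big((\lvert x_\alpha\rvert-x_w)^+\big)\le p(\lvert x_\alpha\rvert\wedge x_w)+w.
$$
Now fix $j\in J$ and a nonzero $e\in E_+$ (if $E_+=\{0\}$ then $X=\{0\}$ and there is nothing to prove). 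Applying the displayed estimate with $w=\frac1n e$ and its witness $x_n\in X_+$, and then applying $\rho_j$ — which is monotone on $E_+$, subadditive, and satisfies $\rho_j(\frac1n e)\le\frac1n\rho_j(e)$ by Remark~\ref{monotonosity of pousedunorm} — yields
$$
\rho_j\big(p(x_\alpha)\big)\le\rho_j\big(p(\lvert x_\alpha\rvert\wedge x_n)\big)+\frac1n\,\rho_j(e).
$$
Since $x_\alpha\uptc 0$, taking $u=x_n$ in Definition~\ref{$up$-convergence} gives $p(\lvert x_\alpha\rvert\wedge x_n)\tc 0$, i.e. $\rho_j\big(p(\lvert x_\alpha\rvert\wedge x_n)\big)\to 0$ in $\mathbb{R}$; hence $\limsup_\alpha\rho_j(p(x_\alpha))\le\frac1n\rho_j(e)$. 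Letting $n\to\infty$ forces $\rho_j(p(x_\alpha))\to 0$, and as $j\in J$ is arbitrary this means $p(x_\alpha)\tc 0$, i.e. $x_\alpha\ptc 0$; undoing the reduction gives $x_\alpha\ptc x$.

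The only delicate point is that Riesz pseudonorms need not be homogeneous, so one cannot simply shrink the error term $w$ by scaling; the sub-homogeneity inequality $\rho_j(\frac1n e)\le\frac1n\rho_j(e)$ of Remark~\ref{monotonosity of pousedunorm} is exactly what lets the constant $\rho_j(w)$ be driven to $0$ along $w=\frac1n e$. Everything else is the routine bookkeeping of the decomposition $\lvert x_\alpha\rvert=\lvert x_\alpha\rvert\wedge x_w+(\lvert x_\alpha\rvert-x_w)^+$ together with monotonicity and subadditivity of $p$ and of the $\rho_j$.
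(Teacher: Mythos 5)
Your proof is correct and follows essentially the same route as the paper's: decompose via $\lvert x_\alpha-x\rvert=\lvert x_\alpha-x\rvert\wedge x_w+(\lvert x_\alpha-x\rvert-x_w)^+$, apply the almost-order bound at level $\frac1n$, kill the first term by $up_\tau$-convergence, and use the sub-homogeneity $\rho_j(\frac1n e)\le\frac1n\rho_j(e)$ of Remark~\ref{monotonosity of pousedunorm} to dispose of the tail. Your version is in fact slightly more careful, since you justify that $(x_\alpha-x)$ (equivalently $(\lvert x_\alpha-x\rvert)$) remains $p$-almost order bounded and make the final $\limsup$ argument explicit, both of which the paper only asserts.
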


\begin{proof}
Suppose that $(x_\alpha)$ is $p$-almost order bounded. Then the net $(\lvert x_\alpha-x\rvert)_\alpha$ is also $p$-almost order bounded. So, for arbitrary $w\in E_+$ and $n\in \mathbb{N}$, there exists $x_w\in X_+$ with 
$$ 
p(\lvert x_\alpha-x\rvert-\lvert x_\alpha-x\rvert\wedge x_w)=p\big((\lvert x_\alpha-x\rvert-x_w)^+\big)\leq \frac{1}{n}w.
$$
Since $x_\alpha \uptc x$, we have $ p(\lvert x_\alpha-x\rvert\wedge x_w)\tc 0$ in $E$. Thus, for the family of Riesz pseudonorms $\{\rho_j\}_{j\in J}$ that generates the topology $\tau$, we have $\rho_j\big(p(\lvert x_\alpha -x\rvert\wedge x_w)\to 0$ for all $j\in J$. Moreover, for any $\alpha$,
\begin{eqnarray*}
p(x_\alpha -x)=p(\lvert x_\alpha -x\rvert)&\leq& p(\lvert x_\alpha -x\rvert-\lvert x_\alpha -x\rvert \wedge x_w)+p(\lvert x_\alpha -x\rvert\wedge x_w)\\ &\leq& \frac{1}{n}w + p(\lvert x_\alpha -x\rvert\wedge x_w). 
\end{eqnarray*}
Hence, by Remark  \ref{monotonosity of pousedunorm}, we get
\begin{eqnarray*}
\rho_j\big(p(x_\alpha -x)\big)&\leq& \rho_j\big( \frac{1}{n}w+p(\lvert x_\alpha -x\rvert\wedge x_w)\big) \ \ \ (\forall j)\\ &\leq& \rho_j(\frac{1}{n}w)+ \rho_j\big(p(\lvert x_\alpha -x\rvert\wedge x_w)\big)\to  \rho_j(\frac{1}{n}w)\leq \frac{1}{n} \rho_j(w)\to 0.
\end{eqnarray*}

Therefore, $\rho_j\big(p(x_\alpha -x)\big)\to 0$, and so $x_\alpha\ptc x$.
\end{proof}

\section{The Mixed-Normed Spaces}
Let $(X,p,E)$ be an $LNS$ and $(E,\lVert\cdot\rVert_E)$ be a normed lattice. The \textit{mixed norm} on $X$ is defined by $p\text{-}\lVert x\rVert_E=\lVert p(x)\rVert_E$ for all $x\in X$. In this case the normed space $(X,p\text{-}\lVert\cdot\rVert)$ is called a \textit{mixed-normed space}; see, for example \cite[7.1.1, p.292]{K}. Consider an LNS $(X,p,E_\tau)$ with $\tau$ being a $un$-topology and $(E,\lVert\cdot\rVert_E)$ being Banach lattice. If $E$ has a strong unit then $x_\alpha\xrightarrow{p\text{-}\lVert \cdot\rVert_E} 0$ iff $x_\alpha\ptc 0$ for any net $(x_\alpha)$ in $X$; see \cite[Thm.2.3]{KMT}. We give results about $uo$- and $un$- topology in the following remark.

\begin{rem}
Let $(X,p,E_\tau)$ be an $op_\tau$-continuous $LSNVL$ with $(E,\lVert\cdot\rVert)$ being normed lattice.
\begin{enumerate}
\item[(i)] If $\tau$ is a $uo$-topology and $(E,\lVert\cdot\rVert)$ is an order continuous normed lattice then $(X,p\text{-}\lVert\cdot\rVert)$ is an order continuous normed lattice. Indeed, suppose $x_\alpha\downarrow 0$ in $X$. Since $(X,p,E_\tau)$ is $op_\tau$-continuous, $p(x_\alpha)\tc 0$ in $E$. Also, since $p(x_\alpha)\downarrow$ and $p(x_\alpha)\uoc 0$, by \cite[Lem.1.2(1)]{KMT}, we have $p(x_\alpha)\downarrow 0$. So, by using the order continuity of norm, we have $\lVert p(x_\alpha)\rVert \downarrow 0$ or $p\text{-}\lVert x_\alpha\rVert\downarrow 0$.
		
\item[(ii)] If $\tau$ is a $un$-topology then $(X,p\text{-}\lVert\cdot\rVert)$ is an order continuous normed lattice. Indeed, assume $x_\alpha\downarrow 0$ in $X$. Hence, $p(x_\alpha)\tc 0$ in $E$ because $(X,p,E_\tau)$ is $op_\tau$-continuous. Since $p(x_\alpha)\unc 0$ and $p(x_\alpha)\downarrow$, applying \cite[Lem.1.2(2)]{KMT}, we get $\lVert p(x_\alpha)\rVert \downarrow 0$ or $p\text{-}\lVert x_\alpha\rVert\downarrow 0$.
\end{enumerate}
\end{rem}

Recall that a Banach lattice is called $un$-complete if every $un$-Cauchy net is un-convergent, \cite{KMT}. Similarly, one can define $up_\tau$-complete. 
\begin{thm}
Let $(X,p,E_\tau)$ be an $LSNVL$ with $\tau$ being the norm-topology on a normed lattice $(E,\left\|\cdot\right\|)$. If $(X,p\text{-}\left\|\cdot\right\|)$ is a $un$-complete Banach lattice, then $X$ is $up_\tau$-complete. 
\end{thm}

\begin{proof}
Let $(x_\alpha)$ be a $up_\tau$-Cauchy net in $X$. So, for every $u\in X_+$, $p(|x_\alpha-x_\beta|\wedge u)\xrightarrow{\tau}0$, or for every $u\in X_+$, $\left\|p(|x_\alpha-x_\beta|\wedge u)\right\|\to 0$ or for every $u\in X_+$, $p\text{-}\left\| |x_\alpha-x_\beta|\wedge u\right\|\to 0$, i.e. $x_\alpha$ is $un$-Cauchy in $(X, p\text{-}\left\|\cdot\right\|)$. Since $(X,p\text{-}\left\|\cdot\right\|)$ is $un$-complete, then there exists $x\in X$ such that $x_\alpha\xrightarrow{un}x$ in $(X,p\text{-}\left\|\cdot\right\|)$. That is, for every $u\in X_+$, $\left\|p(|x_\alpha-x|\wedge u)\right\|\to 0$, i.e. $x_\alpha\uptc x$.
\end{proof} 

Let $X=(X,p,E_\tau)$ be an $LSNVL$. Then $X$ is called a {\em $p_\tau$-KB-space} if every $p_\tau$-bounded increasing net in $X_+$ is $p_\tau$-convergent.
\begin{thm}\label{pKB-pKB}
Let $(X,p,E_\mu)$ and $(E,m,F_\tau)$ be two $p_\tau$-$KB$-spaces. Then the $LSNVL$ $(X,m\circ p,F_\tau)$ is also a $p_\tau$-KB-space.
\end{thm}

\begin{proof}
Let $0\leq x_\alpha\uparrow$, and $m(p[x_\alpha])$ is $\tau$-bounded or $p(x_\alpha)$ is $p_\tau$-bounded in $F$. Hence, since $0\leq p(x_\alpha)\uparrow<\infty$ and $(E,m,F_\mu)$ is a $p_\tau$-KB-space, there exists $y\in E$ such that $m(p(x_\alpha)-y))\tc 0$ in $F$. Thus, $p(x_\alpha)\ptc y$ in $E$, and so the net $(x_\alpha)$ is increasing and $p_\tau$-bounded. Since $X$ is $p_\tau$-$KB$-space, then there exists $x\in X$ such that $p(x_\alpha-x)\to 0$. It can be clearly seen that $(X,m\circ p,F)$ is $op_\tau$-continuous, and so $m(p(x_\alpha-x))\xrightarrow{\tau}0$ i.e. $(m\circ p)[x_\alpha-x]\xrightarrow{\tau}0$. Thus $(X,m\circ p,F)$ is a $p_\tau$-KB-space.
\end{proof}

\begin{cor}\label{pKB-KB}
Let $(X,p,E_\tau)$ be a $p_\tau$-$KB$-space and $(E,\lVert\cdot\rVert)$ be a KB-space. Then $(X,p-\lVert\cdot\rVert)$ is a KB-space.
\end{cor}


\begin{thebibliography}{99}
\bibitem{AA}
	Y. A. Abramovich, C. D. Aliprantis, {\em An invitation to operator theory}, Providence, Rhode Island, USA: American Mathematical Society, 2002.
		
\bibitem{AB}
	C. D. Aliprantis, O. Burkinshaw,
	{\em Locally solid Riesz spaces with applications
		to economics}, {Mathematical Surveys and Monographs},
	105, American Mathematical Society, Providence, 2003.
	
\bibitem{ABPO}
	C. D. Aliprantis, O. Burkinshaw,
	{\em Positive operators,}
	Academic Press, Orlando, London, 2006.

\bibitem{A}
A. Ayd{\i}n, \textit{Topological Algebras of Bounded Operators with Locally Solid Riesz Spaces}, to appear Journal of Science and Technology of Erzincan University, 2018.
	
\bibitem{AGG}
	A. Ayd{\i}n, S. G. Gorokhova, H. G\"{u}l, \textit{Nonstandard hulls of lattice-normed ordered vector spaces}, Turkish Journal of Mathematics, Vol.42, No.1, 155-163, 2018.
	
\bibitem{AEEM}
	A. Ayd{\i}n, E. Yu. Emel'yanov, N. Erkur\c{s}un-\"Ozcan, M. A. A. Marabeh, {\em Unbounded $p$-convergence in lattice-normed vector lattices}, preprint, arXiv:1609.05301v3.
	
\bibitem{AEEM2}
	A. Ayd{\i}n, E. Yu. Emel'yanov, N. Erkur\c{s}un-\"Ozcan, M. A. A. Marabeh, \textit{Compact-like operators in lattice-normed spaces}, Indagationes Mathematicae, Vol.2, No.1, 633-656, 2018.
	
\bibitem{DEM}
	Y. A. Dabboorasad and E. Yu. Emel'yanov and M. A. A. Marabeh, {\em $u\tau$-Convergence in locally solid vector lattices}, Positivity, Vol.22, No.4, 1065-1080, 2018.
	
\bibitem{DEM2}
	Y. A. Dabboorasad and E. Yu. Emel'yanov and M. A. A. Marabeh, {\em $um$-Topology in multi-normed vector lattice}, Positivity, Vol.22 No.2, 653-667, 2018.
	
\bibitem{DOT}
	Y. Deng, M. O'Brien, V. G. Troitsky, {\em Unbounded norm convergence in Banach lattices}, Positivity, Vol.21, No.3, 963-974, 2017.

\bibitem{E}   	
	E. Yu. Emel'yanov, 
	{\em Infinitesimal analysis and vector lattices},
	Siberian Adv. Math., Vol.6, No.1, 1996.
	
\bibitem{EV}
	Z. Ercan, M. Vural,	{\em Towards a theory of unbounded locally solid Riesz spaces}, preprint, arXiv:1708.05288v1.
	
\bibitem{GX}
	N. Gao, F. Xanthos, {\em Unbounded order convergence and application to martingales without probability},
	{J. Math. Anal. Appl.},	Vol.415, No.2, 931-947, 2014.
	
\bibitem{GTX}
	N. Gao, V.G. Troitsky, F. Xanthos, {\em Uo-convergence and its applications to Ces\'aro means in Banach lattices}, Israel Journal of Math, Vol.220, No.2, 649-689, 2017.
	
\bibitem{K}
	A.G. Kusraev,
	{\em Dominated operators},
	{Mathematics and its Applications}, Springer Netherlands, 2000.
		
\bibitem{KMT}
	M. Kandi\'c, M. A. A. Marabeh, V. G. Troitsky, 
	{\em Unbounded norm topology in Banach lattices,}
	Mathematical Analysis and Applications,	Vol.451, No.1, 259-279, 2017. 
	
\bibitem{L}
	L. Hong, {\em On order bounded subsets of locally solid Riesz spaces,} { Quaestiones Mathematicae},	Vol.39, No.3, 381-389, 2016.
	
\bibitem{Tr2}
	M. Kandi\'c, H. Li, V. G. Troitsky, {\em Unbounded norm topology beyond normed lattices}, preprint, arXiv:1703.10654v2.
	
\end{thebibliography}
\end{document}